\newtheorem{theorem}{Theorem}[section]
\newtheorem{lemma}[theorem]{Lemma}
\newtheorem{corollary}[theorem]{Corollary}
\newtheorem{proposition}[theorem]{Proposition}
\theoremstyle{remark}
\newtheorem{remark}[theorem]{Remark}
\theoremstyle{definition}
\newtheorem{definition}[theorem]{Definition}
\numberwithin{equation}{section}
\def\s01{{\{0,1\}}}
\def\cosec #1 {{\textrm {cosec}\ #1}}
\def\seqq#1#2#3#4{{(#4)_{#1=#2}^{#3}}}
\def\summ#1#2#3{{\sum_{#1=#2}^{#3}}}
\def\prodd#1#2#3{{\prod_{#1=#2}^{#3}}}
\def\range #1#2#3{{{#1}={#2},\ldots,{#3}}}
\def\b1{{\bf 1} }
\def\del{{\delta}}
\def\Del{{\Delta}}
\def\alp{{\alpha}}
\def\bet{{\beta}}
\def\gam{{\gamma}}
\def\lam{{\lambda}}
\def\bN{{\ensuremath{\mathbb N} }}
\def\bC{{\ensuremath{\mathbb C} }}
\def\bZ{{\ensuremath{\mathbb Z} }}
\def\bs{{\bf s}}
\def\bi{{\bf i}}
\def\b #1{{\bf #1}}
\def\proof{{\beginpf}}
\def\endproof{{\eopf}}
\def\Lam{{\Lambda}}
\def\eps{{\varepsilon}}
\def\lin{{\textrm{lin}}}
\def\linbar{{\overline{\textrm{lin}}}}
\newcommand\beq{\begin{equation}}
\newcommand\eeq{\end{equation}}
\newcommand\bdfn{\begin{defn}}
\newcommand\blem{\begin{lemma}}
\newcommand\elem{\end{lemma}}
\newcommand\bcor{\begin{cor}}
\newcommand\ecor{\end{cor}}
\newcommand\bthm{\begin{thm}}
\newcommand\ethm{\end{thm}}
\newcommand\edfn{\end{defn}}
\newcommand\bcas{\begin{cases}}
\newcommand\ecas{\end{cases}}
\def\nm#1{{\left\Vert #1 \right\Vert}}
\def\Nm{\Vert \cdot \Vert}
\def\rad{\textrm {rad}}
\newcommand\sprod[2]{\langle{#1},{#2}\rangle}
\def\casif#1!{& \textrm{#1}\\}
\newif\ifrough
\def\reff#1{\ifrough{\ref {#1}=#1}\else{\ref {#1}}\fi}
\def\l#1!{{\ifrough{\textrm{=#1\ }\label{#1}}\else{\label{#1}}\fi}}
\def\refeq#1{{(\reff{#1})}}
\def\up#1{^{(#1)}}
\def\reflem#1{{Lemma \reff{#1}}}
\def\refdfn#1{{Definition \reff{#1}}}
\def\refthm#1{{Theorem \reff{#1}}}
\def\refcor#1{{Corollary \reff{#1}}}
\def\tnm#1{{\left\|\hskip -1pt\left|#1\right|\hskip -1pt\right\|}}
\def\Tnm{{\tnm\cdot}}
\def\benum{\begin{enumerate}}
\def\eenum{\end{enumerate}}
\def\ap1{{\square}}
\newcounter{smallarabics}
\newcounter{smallalphs}
\newcommand\bproof{\begin{proof}}
\newcommand\eproof{\end{proof}}
\newcommand{\beginpf}{\smallskip\textbf{Proof. }}
\newcommand{\eopf}{\hfill $\Box$}
\def\cI{{\mathcal I}}
\def\opnmn#1{{{\left\Vert #1 \right\Vert}_{\textrm{op}}\up n}}
\def\nmtn#1{{{\left\Vert #1 \right\Vert}_{2}\up n}}
\def\Opnmn{{\opnmn\cdot}}
  \DeclareMathOperator{\spec}{Sp}
\DeclareMathOperator{\Cdb}{{\mathbb C}}
\DeclareMathOperator{\Ndb}{{\mathbb N}}
\begin{document}

\title[Operator algebras with cai in $c_0$]{Operator algebras with contractive approximate identities: A
large operator algebra in $c_0$} 
\author{David P. Blecher}
\author{Charles John Read}

\address{Department of Mathematics, University of Houston, Houston, TX
77204-3008}
 \email{dblecher@math.uh.edu}
\address{Department of Pure Mathematics,
University of Leeds,
Leeds LS2 9JT,
England}
 \email{read@maths.leeds.ac.uk}
\thanks{*Blecher was partially supported by a grant  from, 
the National Science Foundation.  Read is grateful for support from UK research council
grant  EP/K019546/1}
\subjclass[2010]{Primary 46B15, 47L30, 47L55; Secondary 43A45, 46B28, 46J10, 46J40}
\keywords{Singly generated operator algebra, algebras generated by orthogonal idempotents, spectral idempotent, approximate identity, 
semisimple, set of synthesis, Banach sequence algebra, Banach  function algebra,
Tauberian, socle.}

\begin{abstract}   
We exhibit a singly generated, semisimple commutative
operator algebra with a contractive approximate identity, 
such that the spectrum of the generator is a null sequence and zero, but the 
algebra is not the closed linear span of the idempotents associated with the null sequence
and obtained from the analytic functional calculus.  Moreover the multiplication 
on the algebra is neither compact nor weakly compact.  Thus we 
construct a `large' operator algebra of orthogonal idempotents, which may 
be viewed as a dense subalgebra of $c_0$. 
 \end{abstract}

\maketitle

\section{Introduction}    

There is an extensive history and theory of operator algebras
on a Hilbert (or Banach) space that are generated by a family of idempotent operators which are
orthogonal (that is, the product of any
two of which is zero); and using such families in `spectral resolutions' of operators. 
Related to this, it is well known that there exist exotic Banach algebras whose 
elements are sequences of scalars, with the 
multiplication being the obvious pointwise one.   That is, there can be quite complicated
Banach algebra norms on subalgebras of the $C^*$-algebra $c_0$  of null sequences. 
 However examples of both of these kinds of algebras, of a certain interesting type
 described below, and which have an approximate identity, 
seem to be missing from the Banach algebra and operator theory literature.   Our main goal here is to provide an explicit, yet 
in some sense universal, example of this kind.  
We first discuss our goal from the operator-theoretic angle, and later in the introduction we 
will mention the Banach algebra viewpoint. 

   Henceforth, by an {\em operator algebra}, we will
mean a norm closed subalgebra of $B(H)$, where the latter 
denotes the bounded linear operators on a Hilbert space
$H$.    There is a large literature 
on  operator algebras generated by a family of mutually orthogonal 
idempotents (see e.g.\ 
\cite{Az,DS,Hus, LN,Ricker} and references therein).  
Such a family arises naturally
when one considers for a bounded operator 
$T$ on a Hilbert space $H$ with Sp$(T)$ 
(countable and) having no nonzero limit  points, the spectral idempotents obtained by the analytic 
functional calculus
from the nonzero isolated points.
These idempotents will be called {\em minimal spectral idempotents}, and they are nonzero by a 
basic property of the functional calculus
(and even have the uniqueness property
in the Shilov idempotent theorem, see e.g. 2.4.33 in
\cite{Dales}).  In this case it is standard  to try to use this family 
to analyze the structure of $T$ (often with an eye to decomposing $T$ in terms of these 
idempotents).   We will henceforth assume that the norm closed algebra $B_T$ generated by 
$T$ in $B(H)$ is semisimple, which implies
that these minimal spectral idempotents $e$ are minimal in the sense that
$e B_T = \Cdb e$.   
It seems that certain specific questions about the algebra $B_T$ that arise in this setting, 
 are essentially unaddressed in the 
literature, to the best of our knowledge.   It is a simple exercise in matrix theory (and using the blanket assumption 
that $B_T$ is semisimple)
 that, if $H$ above were finite dimensional, then $T$ is the sum of the minimal spectral idempotents, each multiplied
by the corresponding eigenvalue.  So it is natural to ask if, for  $T$  as above, 
 $B_T$ is generated  by the minimal spectral idempotents of $T$?    (Saying that 
$B$ is `generated' by a subset, will for us always mean, unless 
stated to the contrary, that $B$ 
 is the smallest norm closed subalgebra of $B$ containing the subset.)  
Such questions  become quite difficult if one adds the assumption that the operator algebras involved have  
 approximate identities.
We will give a counterexample to the question a few lines above, with algebras 
possessing
contractive approximate identities (or {\em cai}'s).   In this example, 
$B_T$ is `large', and in particular is not {\em weakly compact}. 
We will define, for the purposes of this paper,
a commutative Banach algebra $A$ to be weakly compact (resp.\ compact)
if  multiplication on $A$ by $a$ is 
weakly compact (resp.\ compact), for every $a \in A$ (this is not the usual definition, but it is
equivalent to it for algebras with a cai).
 Indeed it is the case that for an operator $T$ with Sp$(T)$ 
 having no nonzero limit  points,
$B_T$ {\em is} generated (in the norm topology) by the minimal spectral idempotents 
of $T$ if $B_T$ is  semisimple,
weakly compact, and 
its socle has an approximate identity (see the end of 
this Introduction  for this and some related  
results, which we show there to be `best possible' in some sense).  

 In algebraic language,  $B_T$  being 
generated by the minimal spectral idempotents, is equivalent to  $B_T$ having 
{\em dense socle} (or being {\em  Tauberian} \cite{LN}); such algebras can be considered to 
be not `big'.  
However the point is that 
the examples in the literature  of operator 
algebras generated by an operator 
and an associated sequence of mutually orthogonal minimal idempotents,
 tend to either be `small' in this sense, or to fall within 
the case where this sequence is uniformly
bounded, or to not have approximate identities,
and do not speak to the question we have mentioned.  We remark that in joint work with Le Merdy,
the first author studied some natural Banach sequence algebras which 
were shown to be operator algebras (see \cite[Chapter 5]{BLM}), but again these 
were not `big' in the sense above, and had no approximate identity.  

We now discuss our goal from
the Banach algebraic perspective, which goes back to Kaplansky  (e.g.\ \cite{Kap}).
We recall that a  {\em natural Banach sequence algebra} on $\Ndb$ is a Banach algebra $A$ of scalar sequences,
which contains the space $c_{00}$ of
finitely supported sequences, and whose characters (i.e.\ nontrivial multiplicative linear functionals) are
precisely the obvious ones: $\chi_n(\vec a) = a_n$ for $\vec a \in A$ (see 
\cite[Section 4.1]{Dales} and \cite{DU}).    In our case 
the sequences in $A$ will converge to $0$, so that $c_{00} \subset A \subset c_0$ (we are not assuming of course that
the norm on $A$ is the $c_0$ norm).   Natural Banach  sequence algebras
have been studied by many researchers (see e.g.\ \cite{Dales,DU,LN} and references therein),
however we are not aware of any such algebras
in the literature which are `big' in the previous sense, namely that the  socle of $A$,
which in this case is $c_{00}$, 
 is not dense in $A$ (that is, $A$ is not Tauberian), or, more generally, that $A$ 
is not  weakly compact,
and which also have a bounded approximate identity (or {\em bai}).
An example due to Joel Feinstein  
of a natural Banach sequence algebra without a dense socle is given in 
\cite[Section 4.1]{Dales}, however it has no bai.  
 The existence of an example of this kind with a bai was asked of us by Dales.  We will construct here a `big' example, which is a singly generated  operator algebra on a Hilbert space, and which has a cai.   
 To relate the Banach  sequence algebra setting to that 
of the previous paragraphs, 
note that if $T$ is an operator on a Hilbert space $H$ with Sp$(T)$
 having no nonzero limit  points, and if the closed algebra  $B_T$ generated 
by $T$ is semisimple, then the Gelfand transform makes $B_T$ into a 
(semisimple) natural Banach sequence algebra  in $c_0$
(by basic Gelfand theory, e.g.\ the standard ideas in the proof of Corollary  \ref{finok}  below).

In this paper we exhibit an  operator algebra example with the desired features discussed above. 
 In hopes of obtaining a tool useful for solving other questions in this
area in the literature, we have deliberately built our example to be as `large as possible',
and this has probably added to the difficulty and complexity of our proofs.
 In particular we show:

\begin{theorem} \label{ch} There exists a  semisimple operator algebra $A$  
  which has a cai  and  a single generator $g$ (and hence $A$  is separable), with the following properties.  The spectrum of the generator of $A$ is
 a null sequence and zero, but $A$ is strictly larger than $\overline{A_{00}}$, the norm closed 
linear span of the minimal spectral idempotents
 associated with  this null sequence and obtained from the analytic functional calculus.  
Also, multiplication by the generator $g$ 
on $A$ is not a weakly compact operator (equivalently,
$A$ is not an ideal in $A^{**}$ with the Arens product, see e.g.\ \cite[1.4.13]{Pal}
or \cite[Lemma 5.1]{ABR}).  
The algebra can be chosen further with $\overline{A_{00}}$ having a cai too; and with
either $A$ contained in the strong operator closure of 
$\overline{A_{00}}$, or not, as desired.
\end{theorem}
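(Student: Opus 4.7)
My strategy is to construct $A$ as a direct sum of finite-dimensional operator algebras on a separable Hilbert space $H = \bigoplus_n H_n$, with generator $g = \bigoplus_n g_n$, where each $g_n \in B(H_n)$ is a finite-dimensional operator with $\spec(g_n) = \{\lambda_n, 0\}$ for a prescribed positive null sequence $(\lambda_n)$, and whose minimal spectral idempotent $P_n \in B(H_n)$ at $\lambda_n$ is a \emph{non-orthogonal} idempotent with $\nm{P_n}$ possibly large. The algebra $A$ is then the norm closure in $B(H)$ of the polynomial algebra $\Cdb[g]$.

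The first step is to confirm the spectral side: $\spec(g) = \{\lambda_n\} \cup \{0\}$, the analytic functional calculus identifies the minimal spectral idempotents with the block operators $P_n$, and the Gelfand transform embeds $A$ into $c_0$, yielding semisimplicity. To make $A$ strictly contain $\overline{A_{00}} = \overline{\spn\{P_n\}}$ it suffices to arrange that $g$ itself is not norm-approximable by finite sums $\sum_{n \leq N} \lambda_n P_n$; equivalently, $\nm{g_n - \lambda_n P_n}_{B(H_n)}$ must not tend to zero. I would achieve this by giving each $g_n$ a controlled non-trivial `nilpotent' part (off-diagonal entries in a cleverly chosen non-orthonormal basis of $H_n$), while keeping $\sup_n \nm{g_n} < \infty$ so that $g$ remains bounded.

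The main obstacle is the construction of a cai in $A$. I would take $u_k = q_k(g)$ for polynomials $q_k$ with $q_k(0) = 0$ and $q_k(\lambda_n) \to 1$ as $k \to \infty$ for each $n$, arranged so that $\nm{u_k}_{B(H)} \leq 1$ and $u_k a \to a$ in norm for every $a \in A$. The difficulty is that within each block $q_k(g_n)$ must have operator norm at most one, even though evaluating against the non-orthogonal idempotent $P_n$ would naively contribute a term of size $\nm{P_n}$; the basis geometry of $H_n$ must absorb the excess. I would resolve this by an \emph{inductive} choice of blocks: at stage $N$, $\dim H_N$ and the precise realization of $g_N$ are chosen just large enough to satisfy the cai bounds imposed by all previously constructed $u_1, \ldots, u_{N-1}$, while pushing $\nm{P_N}$ and $\nm{g_N - \lambda_N P_N}$ up enough to continue witnessing $\overline{A_{00}} \subsetneq A$. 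Balancing these opposing requirements inside a single inductive construction is, I expect, the bulk of the technical work.

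For the remaining assertions, to see that $M_g : x \mapsto gx$ is not weakly compact I would invoke the criterion (noted in the theorem statement) that $M_g$ is weakly compact iff $A$ is an ideal in $A^{**}$, and rule the latter out by exhibiting a bounded sequence in $A$ whose image under $M_g$ has no weakly convergent subsequence; the non-orthogonal structure of the $P_n$ together with $\nm{g_n - \lambda_n P_n} \not\to 0$ makes such a sequence available. The two optional refinements follow by further tuning: whether $A$ lies inside the SOT-closure of $\overline{A_{00}}$ is controlled by the size of the off-diagonal parts of $g_n$, while whether $\overline{A_{00}}$ has its own cai can be arranged by imposing that the truncations $u_k$ themselves lie in $\overline{A_{00}}$, or alternatively by replacing them with cai elements supported on finitely many blocks.
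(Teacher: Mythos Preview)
Your block-diagonal strategy shares its outer shape with the paper's construction (the paper too represents $A$ on a Hilbert direct sum $\bigoplus_n \mathcal H_n$, with $g$ acting blockwise), but there is a genuine gap at the heart of your plan.

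\textbf{The semisimplicity--separation tension.} You stipulate that each block has $\spec(g_n)=\{\lambda_n,0\}$, and then force $g\notin\overline{A_{00}}$ by giving $g_n$ a nontrivial ``nilpotent part'' so that $\Vert g_n-\lambda_n P_n\Vert$ stays bounded below. But in finite dimensions, $g_n-\lambda_n P_n=g_n(1-P_n)$ is genuinely nilpotent; once it is nonzero, the block algebra $\Cdb[g_n]$ has a radical, and so, typically, does $A=\overline{\Cdb[g]}$. Your sentence ``the Gelfand transform embeds $A$ into $c_0$, yielding semisimplicity'' is circular: injectivity of the Gelfand map \emph{is} semisimplicity, and with nilpotent pieces present it will fail. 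Conversely, if you insist on semisimplicity with only one nonzero eigenvalue per block, then $g_n=\lambda_n P_n$ exactly; but now your cai elements satisfy $q_k(g_n)=q_k(\lambda_n)P_n$, so the constraint $\Vert q_k(g)\Vert\le 1$ forces $|q_k(\lambda_n)|\le \Vert P_n\Vert^{-1}$ for every $n$, which is incompatible with $q_k(\lambda_n)\to 1$ unless $\Vert P_n\Vert=1$, collapsing everything back to $c_0$. So with one nonzero eigenvalue per block, the three requirements (semisimple, cai, $g\notin\overline{A_{00}}$) are mutually inconsistent; the ``inductive balancing'' you describe cannot be carried out as stated.

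\textbf{What the paper does differently.} The paper sidesteps this by letting each block carry \emph{many} eigenvalues $2^{-j}$ (for $a_n<j\le a_{n+1}$) and keeping $g$ \emph{diagonal} in the standard basis---so there is no nilpotent part and semisimplicity is automatic. The exotic behaviour is pushed entirely into the choice of Hilbert norm $\Vert\cdot\Vert_2^{(n)}$ on each block, defined via a carefully designed functional $\phi_n$ (equation~\refeq{nm2}). The cai is not obtained by a posteriori balancing; instead a finite set $S_0$ of prospective cai elements and spectral witnesses is fixed \emph{first} (Definition~\ref{s0}), and the norm is then the largest one making all of $S_0$ contractive. The hard analytic work (Sections~\ref{sixse}--\ref{9se}) is the single estimate $\Vert\gamma_0^{(n)}\Vert_2^{(n)}\le 3\Vert\gamma_1^{(n)}\Vert_2^{(n)}$ of Lemma~\ref{nm22}, which yields uniformly bounded functionals $\psi_n$ with $\psi_n(g)=1$ and $\psi_n|_{\overline{A_{00}}}\to 0$, hence $g\notin\overline{A_{00}}$ (Theorem~\ref{sec}). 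The two SOT alternatives are then obtained not by tuning block sizes but by either using the concrete representation $\rho$ (Lemma~\ref{noca}) or by passing to the universal representation of the generated $C^*$-algebra.

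In short: your outline would need, at minimum, blocks with several nonzero eigenvalues and a mechanism that builds the cai into the norm rather than trying to find it afterwards; at that point you are essentially redoing the paper's construction.
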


Our example (in particular our algebras $A, \overline{A_{00}}$, and their unitizations), will hopefully be useful in settling other open questions in the subject.  As an illustration of how it can be used in that capacity, 
we mention that Joel Feinstein has pointed out to us that the unitization of our example 
 solves an old question of his (see \cite{JF1,JF2}, although he has informed us that the question goes back at least as far  his thesis), and another similar question of Dales, namely 
 whether a certain variant of the notion of peak sets for regular Banach function algebras are `sets of
 synthesis'.  We shall explain this application in more detail at the end of 
Section \ref{Addpro}.
 
Concerning the layout of our paper, in the next section we turn to the construction of our algebras $A$ and $\overline{A_{00}}$
described above.  The development will become increasingly technical
as the paper proceeds.  However in Section \ref{hilr} we will prove the 
key part of our main theorem
with one lemma taken on faith, and in Section \ref{Addpro} we will pause
and describe many properties that our algebras $A$ and $\overline{A_{00}}$
 possess.   
  The material following  Section \ref{Addpro}
consists of the
lengthy proof of the lemma  just referred to.

We end this introduction with some general positive results on the topics above;
namely  sufficient conditions
for when $B_T$ is generated by the minimal spectral idempotents of $T$.
We remark that in \cite[Proposition 1.1]{Az} 
 it is shown that any closed algebra on a separable Hibert space
generated by a family of `mutually orthogonal' idempotents, is 
topologically singly generated. 
  
\begin{proposition} \label{isalg}
   Suppose that $D$ is a Banach  algebra, 
which  is an essential  ideal in a commutative Arens regular Banach algebra $A$ (`essential' means that
the canonical representation of $A$ on $D$ is one-to-one).  Assume that  $A$ is
 weakly compact and $D$ has a bai.  Then $A = D$.  
\end{proposition}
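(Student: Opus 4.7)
The plan is to argue that every $a \in A$ lies in $D$ by using a weak-$*$ cluster point of the bai of $D$ inside $A^{**}$ as a kind of identity, together with the weak compactness hypothesis to pull the relevant limits back into $A$.

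First I would fix a bai $(e_\lambda)$ for $D$, bounded by some $M$. Since $D$ is a closed subalgebra of the Arens regular $A$, $D$ itself is Arens regular, and $D^{**}$ sits inside $A^{**}$ as a subalgebra (and in fact as an ideal, since $D$ is an ideal in $A$). Pick any weak-$*$ cluster point $E \in D^{**}$ of $(e_\lambda)$; passing to a subnet, assume $e_\lambda \to E$ weak-$*$ in $A^{**}$. A standard bai argument, together with Arens regularity, shows that $Ed = d = dE$ for every $d \in D$: indeed $e_\lambda d \to d$ in norm, hence weak-$*$, while on the other hand $e_\lambda d \to Ed$ weak-$*$ by the weak-$*$ continuity of the relevant slice of the (unique) Arens product.

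Next, fix $a \in A$ and consider the bounded net $(a e_\lambda) \subset D$. By the weak compactness hypothesis, the operator $L_a$ of multiplication by $a$ on $A$ is weakly compact, so after passing to a further subnet there exists $b \in A$ with $a e_\lambda \to b$ weakly in $A$. Since each $a e_\lambda \in D$ and $D$ is norm-closed (hence weakly closed) in $A$, we have $b \in D$. On the other hand, weak convergence in $A$ implies weak-$*$ convergence in $A^{**}$, and $L_a$ extends to a weak-$*$ continuous map on $A^{**}$, so the same subnet gives $a e_\lambda \to aE$ weak-$*$ in $A^{**}$. Therefore $aE = b \in D$.

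Finally, for any $d \in D$, associativity of the Arens product in $A^{**}$ (valid by Arens regularity) gives
\[
bd \;=\; (aE)d \;=\; a(Ed) \;=\; ad,
\]
so $(a - b)d = 0$ for all $d \in D$. Since $D$ is essential in $A$, the representation $A \to B(D)$ by multiplication is injective, forcing $a = b \in D$. Thus $A \subseteq D$, and the reverse inclusion is trivial.

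The only real subtlety is ensuring that the weak cluster point $b$ of $(ae_\lambda)$ in $A$ coincides with the weak-$*$ product $aE$ in $A^{**}$; this is where Arens regularity and the weak compactness of $L_a$ work together, and it is the step I would check most carefully. The essentialness of $D$ is then just the clean-up step that converts the identity $ad = (aE)d$ into $a = aE$.
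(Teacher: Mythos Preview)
Your argument is correct and follows essentially the same line as the paper's proof: both use a weak-$*$ limit $E$ of the bai as an identity for $D$ in $A^{**}$, use weak compactness to bring $aE$ back into $A$, and then invoke essentialness to conclude $a=aE$. The only real difference is packaging: the paper quotes the standard equivalence ``$A$ weakly compact $\Leftrightarrow A^{**}A\subset A$'' to get $eA\subset A$ in one stroke, whereas you reprove the needed instance by hand (and in doing so land $aE$ directly in $D$ rather than merely in $A$). Your version is thus slightly more self-contained; the paper's is slightly shorter by citation.
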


\proof  
By e.g.\ \cite[1.4.13]{Pal} or \cite[Lemma 5.1]{ABR} we have $A^{**} A \subset A$.
Let $e \in D^{\perp \perp}$ be the `support projection' in $A^{**}$ of $D$, an identity for
$D^{\perp \perp}$, and write $1$ for the identity
of the unitization of $A$.
Then $(1-e) D = 0$, and  for any $a \in A$ we have $a(1-e) D = 0$.  Since
$e A \subset A$, and $D$ is an essential ideal in $A$, we see that $a(1-e) = 0$.
Thus $e$ acts as an identity on $A$ and therefore also on $A^{**}$, so that
$A^{**} = e A^{**} \subset
D^{\perp \perp}.$
Hence $A = D$.
\endproof

\begin{corollary} \label{finok}  Let $T$ be an operator on a Hilbert space whose spectrum
 has no nonzero limit points, and let $B_T$ (resp.\ $B$) 
be the closed algebra  generated by $T$
 (resp.\  by the minimal spectral idempotents of $T$).  If $B_T$ is semisimple
and weakly compact, and if 
$B$ has a bai,
then $B = B_T$.
\end{corollary}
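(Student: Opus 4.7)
The plan is to reduce this to Proposition~\reff{isalg}, taking $A = B_T$ and $D = B$. Write $\{\lambda_n\}$ for the nonzero points of $\mathrm{Sp}(T)$ and $e_n$ for the associated minimal spectral idempotents; since each $e_n$ arises from $T$ by the analytic functional calculus, $B \subseteq B_T$.

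I would first dispense with the easier hypotheses. The algebra $B_T$ is commutative (being singly generated) and, by assumption, weakly compact; the fact that weak compactness of all left multiplications implies Arens regularity is a classical observation, so $B_T$ is Arens regular. The bai for $B$ is given in the hypothesis, and $B$ is an ideal of $B_T$ because semisimplicity of $B_T$ gives $e_n B_T = \Cdb e_n$ (as the introduction has already noted), making the linear span of the $e_n$ an ideal, whose norm closure is $B$.

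The serious step is essentiality of $B$ in $B_T$, i.e.\ that no nonzero element of $B_T$ kills $B$. I would argue as follows. Any character $\chi$ of $B_T$ must satisfy $\chi(T) \neq 0$: otherwise $\chi$ would vanish on every polynomial in $T$ with zero constant term, and hence by continuity on all of $B_T$, contradicting that $\chi$ is nonzero. So each character has the form $\chi_n$ with $\chi_n(T)=\lambda_n$ for a unique $n$, and writing $e_m = f_m(T)$ with $f_m$ holomorphic near $\mathrm{Sp}(T)$ and equal to $\delta_{mk}$ at $\lambda_k$, the analytic functional calculus produces $\chi_n(e_m) = f_m(\lambda_n) = \delta_{nm}$. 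Now if $a \in B_T$ satisfies $a B = 0$, then $a e_n = 0$ for every $n$, whence $\chi_n(a) = \chi_n(a)\chi_n(e_n) = \chi_n(ae_n) = 0$ for all $n$; injectivity of the Gelfand transform (semisimplicity of $B_T$) then forces $a = 0$. Thus $B$ is essential in $B_T$, and Proposition~\reff{isalg} applies to give $B = B_T$.

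The principal obstacle I anticipate is the essentiality verification, and in particular the small but decisive observation that no character of $B_T$ can send $T$ to $0$; once this is in place, the remainder is a routine combination of Gelfand theory with the cited proposition.
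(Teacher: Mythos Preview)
Your proposal is correct and follows essentially the same route as the paper: identify the characters of $B_T$ with the points $\lambda_n$, deduce $e_n B_T=\Cdb e_n$ so that $B$ is an ideal, use semisimplicity to show $B$ is essential, and then invoke Proposition~\reff{isalg}. Your explicit remark that weak compactness of all multiplications forces Arens regularity is a detail the paper leaves implicit, and your argument that $\chi(T)\neq 0$ for every character of $B_T$ is just a slightly different phrasing of the paper's observation that the unique character of $B_T^1$ sending $T$ to $0$ is the one annihilating $B_T$; both arguments then rely on the (unstated but standard) fact that $\mathrm{Sp}_{B_T^1}(T)=\mathrm{Sp}_{B(H)}(T)$, which holds here because the complement of a null sequence and its limit is connected.
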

\proof  Suppose that ${\rm Sp}(T) \setminus \{0 \} = \{ \lambda_n \}$.
By basic Gelfand theory, the set of characters of the unitization $B_T^1$ is $\{ \chi_n : n \in \Ndb_0 \}$,
where $\chi_0$ annihilates $B_T$, and $\chi_n(T) = \lambda_n$ for $n \in \Ndb$.  By the functional
calculus $\chi_m(e_n) = \delta_{nm}$ for $n \in \Ndb, m \in \Ndb_0$, where $e_n$ is the spectral idempotent 
in $B_T^1$ corresponding to $\lambda_n$.    Hence $e_n \in B_T$, and $e_n T - \lambda_n e_n \in 
\cap_{m} \, {\rm Ker}(\chi_m)= (0)$.  Thus $e_n B_T = \Cdb e_n$ for all $n$, 
and $\chi_n(a) e_n = a e_n$ for all $a \in B_T$.  
Hence $B B_T \subset B$,  that is $B$ is an ideal in $B_T$.  
 Indeed $B$ 
is an essential  ideal in $B_T$, since the latter is semisimple (if $a e_n = 0$ for all
$n$ then $\chi(a) = 0$ for all characters $\chi$).   Thus
$B = B_T$ by  
Proposition \ref{isalg}.   \endproof

\medskip

For operators on a Hilbert space whose spectrum
 has no nonzero limit points, the last result is sharp
 in the following sense:

\begin{theorem} \label{finok2}
  In the last result no one of the following three
 hypotheses 
can simply be
removed, in general: $B_T$ is semisimple; or  $B_T$ is  weakly compact; or 
$B$ has a bai.
\end{theorem}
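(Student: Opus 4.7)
The plan is to exhibit, for each of the three hypotheses of Corollary \ref{finok}, a Hilbert space operator $T$ that violates exactly that hypothesis while satisfying the other two, and for which $B \neq B_T$.

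For the failure of semisimplicity, take $T = \left(\begin{smallmatrix}0 & 1 \\ 0 & 0\end{smallmatrix}\right)$ acting on $\mathbb{C}^2$: then $B_T = \mathbb{C}T$ is finite-dimensional (so weakly compact), $\mathrm{Sp}(T) = \{0\}$ has no nonzero limit points, the set of minimal spectral idempotents is empty so $B = (0)$ vacuously has a bai, yet $B_T \neq B$. For the failure of weak compactness, apply Theorem \ref{ch} with $T = g$: then $B_T = A$ is semisimple and singly generated, $B = \overline{A_{00}}$ has a cai (hence a bai), $\mathrm{Sp}(g)$ has no nonzero limit points, and $A \neq \overline{A_{00}}$ precisely because multiplication by $g$ fails to be weakly compact.

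For the failure of the bai condition on $B$, I would take $T = \bigoplus_{n \ge 1} T_n$ on the Hilbert direct sum $H = \bigoplus_n \mathbb{C}^2$, with $T_n = \left(\begin{smallmatrix} 1/n & -1/(n+1) \\ 0 & 1/(n+1) \end{smallmatrix}\right)$, a matrix with distinct eigenvalues $\{1/n,\ 1/(n+1)\}$ and skew eigen-projections of operator norm $\sim n$. Since $\|T_n\| = O(1/n)$, $T$ is compact and $\mathrm{Sp}(T) = \{0\} \cup \{1/n : n \ge 1\}$, and the minimal spectral idempotent $e_k$ of $B_T$ at $1/k$ is supported on blocks $k-1$ and $k$ with $\|e_k\| \sim k$. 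Via the Gelfand transform $B_T$ identifies with a natural Banach sequence algebra in $c_0$, where the operator norm is comparable to $\sup_m \max(|a_m|,\, m|a_{m+1} - a_m|)$, and $T$ corresponds to $(1/n)_n$; a direct computation shows that $B = \overline{\mathrm{span}}\{e_n\}$ identifies with $\{(a_n) \in B_T : n|a_n| \to 0\}$, whence $T \in B_T \setminus B$. Semisimplicity of $B_T$ is clear since the characters $\chi_k(a) = a_k$ separate points, and multiplication in $B_T$ is compact (hence weakly compact) by blockwise domination; were $B$ to admit a bai, Proposition \ref{isalg} applied to the essential ideal $B$ of the weakly compact commutative $B_T$ would force $B = B_T$, contradicting $T \in B_T \setminus B$, so $B$ has no bai.

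The main obstacle lies in Case 3: identifying the operator norm of $B_T$ with the stated sequence-algebra expression, describing $\overline{\mathrm{span}}\{e_n\}$ in this picture, and verifying compactness of multiplication on $B_T$ — all three reducing to systematic $2 \times 2$ block estimates together with polynomial approximation on the weighted set $\{0\} \cup \{1/n\}$.
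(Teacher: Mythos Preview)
Your treatments of the first two hypotheses are fine. The nilpotent $2\times 2$ example is a clean alternative to the paper's reference to \cite{BRI}, and your use of Theorem~\ref{ch} for the weak compactness case matches the paper exactly.

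The third example, however, does not work: in your algebra one actually has $T\in B$, so $B=B_T$. Your norm identification is correct---on block $m$ an element with Gelfand transform $(a_k)$ acts as $\left(\begin{smallmatrix} a_m & m(a_{m+1}-a_m)\\ 0 & a_{m+1}\end{smallmatrix}\right)$, with operator norm comparable to $\max(|a_m|,\,m|a_{m+1}-a_m|)$---but your description of $\overline{\mathrm{span}}\{e_k\}$ as $\{(a_n): n|a_n|\to 0\}$ is wrong. The correct closure of $c_{00}$ in this norm is $\{(a_n): a_n\to 0 \text{ and } n|a_{n+1}-a_n|\to 0\}$, and $(1/n)$ lies in this set. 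Concretely, take the soft cutoff $b_n=(1/n)\phi(n/N)$ with $\phi$ piecewise linear, equal to $1$ on $[0,1]$ and $0$ on $[2,\infty)$; then $(a-b)_n = 1/N - 1/n$ on $[N,2N]$ and $1/n$ beyond, and one checks directly that $\sup_m\max(|(a-b)_m|,\,m|(a-b)_{m+1}-(a-b)_m|)\le C/N\to 0$. (The sharp cutoff fails because of the jump at $m=N$, which is presumably what misled you into the condition $n|a_n|\to 0$.) Since $T\in B$, your argument that $B$ has no bai via Proposition~\ref{isalg} collapses.

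The paper handles this case with a different and rather elegant example: the space $c$ of convergent sequences with the twisted product $\vec x\cdot\vec y=(2^{-n}x_ny_n)$, generated by $T=(1,1,1,\ldots)$. There $B_T$ is semisimple with spectrum $\{2^{-n}\}\cup\{0\}$, multiplication is compact (since $T\cdot\vec x$ is the pointwise product of a fixed $c_0$ sequence with a bounded sequence), the minimal spectral idempotents are $2^n\vec e_n$, and $T\notin\overline{c_{00}}$ simply because $T$ has a nonzero limit. The mechanism keeping $T$ out of $B$ there is the nonvanishing limit of the sequence, not unbounded idempotents---and that is what your construction lacks.
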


\proof  To see that the semisimplicity condition cannot  be removed, consider the long example in \cite[Section 5]{BRI}
(or one could consider the
Volterra operator, or the direct sum
of the Volterra operator and a generator for $c_0$).
Theorem \ref{ch} in the present
paper shows that  the weak compactness condition cannot  be removed, even if in addition $B$ and
$B_T$ have cai.

Finally, we will show that
the approximate identity condition cannot be removed, even if the algebra is `compact'.
Our  algebra $A$ will be the space $c$ of convergent sequences with product 
$\vec x \cdot \vec y = (\frac{1}{2^n} \, x_n y_n)$ (an example also
mentioned briefly by Mirkil in a Banach algebra context).  
Clearly $A$
 is  generated by 
$T = (1,1, 1 \cdots)$ and $c_{00}$.  Since $\vec x \cdot \vec y$ is the usual
product of $\vec x, \vec y,$ and $(\frac{1}{2^n} )$,  $A$ is a commutative  operator algebra by \cite[Remark 2 on p. 194]{BRS}.
The vectors $2^n \vec e_n$ are minimal idempotents in $A$, inducing characters $\chi_n$ on $A$, and it is clear now
that $A$ is semisimple.   Conversely, since
 any character on $c_{0}$ must be induced by a sequence in $\ell^1$, it is easy to see that
such a character is the restriction of one of the $\chi_n$.  It is also then easy that any character on $A$ is
one of the $\chi_n$.   
We leave it as an exercise that
 $T$ generates $c_{00}$.  Hence  the spectrum of $T$ is $\{ \frac{1}{2^n} \} \cup \{ 0 \}$
and the spectrum of $A$ is homeomorphic to $\Ndb$.   If $E_n$ is the minimal spectral idempotent of $T$ corresponding 
to  $\frac{1}{2^n}$ in the spectrum, then $E_n \in A$ by an argument in Corollary \ref{finok}.  
  Thus
these minimal spectral idempotents
are exactly the $2^n \vec e_n$ above, by e.g.\ \cite[Theorem 1.2]{Kol}.  So $A = B_T$ has discrete spectrum, but it
is clearly not Tauberian.  To see that $A$ is compact suppose that $(\vec x(n))_n$ is a bounded sequence in $c$.
Then $T \cdot \vec x(n) = (x(n)_m/2^m)$, which  is the product of a fixed sequence in $c_0$ with
 a bounded sequence in $c_0$.
Since $c_0$ is compact, there is a convergent subsequence as desired.
 \endproof

\begin{remark}
Theorem \ref{finok2} suggests the
question if in Corollary \ref{finok} the condition that $B$ has a bai may be replaced by $B_T$ having a bai (or cai).
\end{remark}

\begin{corollary} \label{Palm}  A semisimple, topologically 
singly generated operator algebra,
whose socle has a bai,
has a  dense socle if and only if it is compact.
  \end{corollary}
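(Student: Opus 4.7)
The plan is to prove each direction separately, with Corollary \ref{finok} doing the bulk of the work in the nontrivial implication.

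First I would treat the easy direction: dense socle implies compact. Given $a \in A$ and $\varepsilon > 0$, pick $b$ in the socle with $\|a - b\| < \varepsilon$. Since $A$ is commutative and semisimple, its socle is the linear span of its minimal idempotents, and distinct minimal idempotents are mutually orthogonal (if $ef \ne 0$ then $ef = e = f$ using $eA = \Cdb e$ and $fA = \Cdb f$). Thus $b = \sum_{i=1}^{n} \lambda_i e_i$ for finitely many mutually orthogonal minimal idempotents $e_i$, and the multiplication operator $L_b : c \mapsto bc$ has range in $\mathrm{span}\{e_1,\ldots,e_n\}$, hence is finite rank. Since $\|L_a - L_b\|_{\mathrm{op}} \le \|a-b\| < \varepsilon$, the operator $L_a$ is a norm limit of finite rank operators and therefore compact. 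As $a$ was arbitrary, $A$ is compact.

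For the nontrivial direction, let $T$ be a single generator of $A$, so $A = B_T$. The first step is to show $\sigma_A(T)$ has no nonzero limit points. Because $A$ is topologically generated by $T$, every nonzero character $\chi$ of $A$ satisfies $\chi(T) \ne 0$; otherwise $\chi$ would vanish on every polynomial in $T$ with zero constant term, and hence on all of $A$. From $\chi \circ L_T = \chi(T)\,\chi$ we then see that $\chi(T)$ is a nonzero eigenvalue of $L_T^*$. By the compactness hypothesis $L_T$ is a compact operator on $A$, so its nonzero spectrum is discrete in $\Cdb \setminus \{0\}$. Since by Gelfand theory every nonzero point of $\sigma_A(T)$ arises as some $\chi(T)$, it follows that $\sigma_A(T)$ has no nonzero limit points.

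Next I would run the argument in the proof of Corollary \ref{finok}: it produces minimal spectral idempotents $e_n \in A$ with $e_n A = \Cdb e_n$ and characters $\chi_n$ satisfying $\chi_m(e_n) = \delta_{mn}$. Conversely, any minimal idempotent of $A$ induces a character which must coincide with some $\chi_n$, forcing the idempotent itself to equal $e_n$. Hence the socle of $A$ is the linear span of the $e_n$, and because the $e_n$ are mutually orthogonal the closed algebra $B$ they generate coincides with the closure of the socle. The socle has a bai by hypothesis, so $B$ does too. Since $B_T = A$ is semisimple and compact (hence weakly compact), Corollary \ref{finok} gives $B = B_T = A$; that is, the socle is dense. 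The main obstacle I anticipate is the clean passage from compactness of $L_T$ on $A$ to discreteness of $\sigma_A(T) \setminus \{0\}$ in the possibly non-unital setting, but the observation that no nonzero character annihilates $T$ (because $A$ is topologically generated by $T$) bridges it immediately.
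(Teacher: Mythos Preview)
Your proof is correct and follows essentially the same route as the paper: both directions ultimately reduce to Corollary~\ref{finok}, after identifying the socle with the span of the minimal spectral idempotents of the generator $T$. The only notable difference is in how you obtain that $\sigma_A(T)$ has no nonzero limit points: the paper quotes the general fact (from \cite[Chapter~8]{Pal}) that a semisimple compact Banach algebra has discrete Gelfand spectrum, whereas you prove this directly in the singly generated case by observing that each nonzero character gives a nonzero eigenvalue of the compact operator $L_T^*$. Your argument is more self-contained; the paper's is shorter but relies on the external reference. Both the easy direction and the identification of minimal idempotents with the spectral idempotents $e_n$ are handled in the same way.
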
 

\proof  We first prove that, more generally, a semisimple, weakly compact,  operator algebra with a
topological single generator, 
whose socle has a bai,
and which has discrete spectrum, has a dense socle (equivalently, is Tauberian). This  follows
from Corollary \ref{finok}.  Indeed if $T$ is a topological single generator for 
such an algebra $A$, then 
the spectrum of $g$ (minus $0$) is homeomorphic to the discrete spectrum
of $A$, so has no nonzero limit points.   The minimal idempotents in $A$, whose span 
defines the socle, 
define characters of $A$, so, as in the proof of
Corollary \ref{finok}, they must be the minimal spectral idempotents 
of $T$.   

That a Banach algebra with dense socle is `compact' is obvious
(or see e.g.\ \cite[Proposition 8.7.7]{Pal}).  Conversely,  semisimple 
compact Banach algebras
 have discrete spectrum (see e.g.\ \cite[Chapter 8]{Pal}).   Thus the 
first result follows from the last paragraph. 
 \endproof

\begin{remark}
We point out an error in \cite{ABR} that momentarily led us astray early in this work.
Namely, in the last assertion of \cite[Theorem 5.10 (4)]{ABR}, to get a correct statement
the characters there are not allowed to vanish on
$A$.  This  led to a mistaken comment at the end of
the first paragraph of the Remark after Proposition 5.6 there, concerning the spectrum of $A^{**}$.
Fortunately these results have not been used elsewhere.
\end{remark}

\section{The general construction} \label{gco}

Let $A_0$ be the dense subalgebra of $c_0$ generated by the unit vectors $e_i$ and the vector
$g=\summ i1\infty \, 2^{-i} \, e_i  = (\frac 12,\frac 14,\frac 18,\ldots)$. 
(In passing we remark that this notation differs from 
the meaning of $A_0$ in \cite{Dales,DU}, and we apologize for 
any confusion to those familiar with that literature.) 
We seek to renorm $A_0$ so that
 its completion $A$ is  an operator algebra such that  
\begin{enumerate}
\item $A$ has a cai, and is topologically generated by $g$.
\item the spectrum of $g$ is $\{2^{-n}:n\in\bN\}$,
\item $g\notin{\overline\lin\{e_i:i\in\bN\}}$, and 
\item $A$ is semisimple.
\end{enumerate}
Note that what forces us to change (increase!) the usual norm on $c_0$ is condition (3).
In fact the norm will be increased in such a way that the unit vectors $e_i$, which will be the spectral idempotents for 
the generator $g$, are unbounded.
 For $n\in\bN_0$, write  $P_n=\summ i1ne_i$, and 
let $$g_n=2^ng\wedge 1=(1,1,1,\ldots,1,\frac 12,\frac 14,\frac 18,\ldots)
=\summ i1ne_i+\summ i1\infty2^{-i}e_{i+n} .$$ 
Note that $g_n \in 2^n g + \lin \{e_1, \cdots , e_n \} \in A_0.$

\blem\l 1!  Let $\Nm$ be any algebra norm on the algebra $A_0$.
Suppose that for some strictly increasing sequence  
  $\seqq i1\infty{a_i}\subset\bN$, 
we have 
$$\nm{g_{{a}_n}^{a_n}}\le 1+\frac 1n \; , \; \; \text{and} \; \nm{g_{{a}_n}^{a_n}\cdot g - g}\le \frac 1n.$$ 
Then 
the vectors $x_n=\frac n{n+1}g_{{a}_n}^{a_n}$ are a cai for $(A_0,\Nm)$.
\elem
\begin{proof}
The conditions we are given ensure that $\nm{x_n}\le 1$ and $x_n g\to g$. But $e_m=2^mge_m$, and so  
$x_n e_m\to e_m$ also. The vectors $e_m$ and $g$ generate $A_0$, and so $\seqq n1\infty{x_n}$ is a cai.
\end{proof}

\medskip

If $A$ is the completion of $A_0$ in some algebra norm, we write $\overline{A_{00}}$ for the closed ideal ${\overline \lin\{e_i:i\in\bN\}}$ in $A$.

\blem\l 2! Once again, let $\Nm$ be any algebra norm on the algebra $A_0$, and let $A$ denote the completion of $(A_0,\Nm)$.
Suppose that for some strictly increasing  sequence   $\seqq i1\infty{a_i}\subset\bN$, 
 we have 
$$\nm{g^{a_n}(I-P_{{a}_n})}\le n^{-a_n}. $$
Then 
the spectrum of $g\in A$ is precisely $\{2^{-n}:n\in\bN\}\cup\{0\}$,
and the spectral idempotents for the eigenvalues $2^{-n}$, obtained from the analytic functional calculus
for $g$, are precisely the unit vectors $e_n$.
\elem
\begin{proof}
For every $x\in A_0$ and $n\in\bN$ we have $xe_n=\chi_n(x) \, e_n$ for a unique 
complex number $\chi_n(x)$. Even in the completion $A$ this will be true, because 
for $y\in A$ the product $ye_n$ is a limit of scalar multiples of $e_n$, and so is a multiple of $e_n$.
We now see that $\chi_n$  is a character on $A$ with  $\chi_n(g)=2^{-n}$.  Thus  $2^{-n}\in\spec(g)$.

Conversely, we claim that $\spec(g)$ does not contain any $\lam\ne 0$ that is not a negative integer power of 2.
If it does, it contains such a $\lam\in\partial\spec(g)$, so $\lam$ is in the approximate point spectrum of $g$.
 Pick $n$ so large that $1/n<|\lam|$. For $y\in (I-P_{{a}_n})A$ with $\nm y=1$ we have  by hypothesis that 
$\nm{g^{a_n}y}\le n^{-a_n}<|\lam|^{a_n}$.  Thus $\lambda$ is not in the spectrum of the operator of multiplication
by $g$ on $(I-P_{{a}_n})A$.    There is therefore an $\eta>0$ such that $\nm{gy-\lam y}\ge\eta\nm y$
for all $y\in(I-P_{{a}_n})A$.  In the subalgebra $$P_mA=(e_1+e_2+\ldots e_m)A=\lin\{e_j:j\le m\}$$ the spectrum of $g$ is 
$\{2^{-j}:j\le m\}$, so there is an $\eta'$ such that    $\nm{gy'-\lam y'}\ge\eta'\nm {y'}$ for $y'\in P_mA$.
Write $m={a}_n$ and let $z\in A$.
Then $$\nm{gz-\lam z}\ge\frac 1{\nm{P_{m}}}\cdot\nm{gP_mz-\lam P_mz}\ge \eta'\nm{P_mz}/{\nm{P_{m}}},$$ and also
$$\nm{gz-\lam z}\ge\frac 1{1+\nm{P_{m}}}\cdot\nm{g(1-P_m)z-\lam(1-P_m)z}\ge \eta\nm{(1-P_m)z}/(1+\nm{P_{m}}).$$
Also $\nm{gz-\lam z}\ge C \nm{P_m z + (z - P_m z) } = C \nm{z}$, for some positive constant $C$.
Therefore $\lam$ is not in the approximate point spectrum of $g$. This contradiction proves the claim;
  that is, 
 $\spec(g) = \{ 0 \} \cup \{ 2^{-n} : n \in \bN \}$.

To identify the spectral idempotent for the point $2^{-n}$,
we decompose the unitization $A^1$ as a direct sum of ideals
$P_{a_m} A^1 \, \oplus \, (I-P_{a_m}) A^1$, where $m$ is much larger than $2^n$.
There is a corresponding decomposition of the spectral idempotent as a sum of the 
spectral idempotent in $P_{a_m} A^1$, which is easy to see is $e_n$, and 
 the spectral idempotent in $(I-P_{a_m}) A^1$.  The latter
 is zero  since by hypothesis, $\nm{g^{a_m}(I-P_{{a}_m})}\le m^{-a_m}$, so that
the spectral radius of $g(I-P_{{a}_m})$ is much smaller than $2^{-n}$.  (One may also use this
in conjunction with the criteria in  \cite[Theorem 1.2]{Kol}).
\end{proof}

\begin{corollary} \label{3}
  If the conditions of  {\rm  \reflem{2}} hold, then $A$ is singly generated by $g$, and 
the space of characters of $A$ is $\{\chi_n:n\in\bN\}$, with 
$\chi_n$ as defined above. 
\end{corollary}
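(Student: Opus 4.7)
The plan is twofold: first show that each unit vector $e_n$ lies in the closed subalgebra of $A$ generated by $g$ (which together with the density of $A_0 = \mathrm{alg}\{g, e_1, e_2, \ldots\}$ immediately yields single generation), and second use this to pin down the characters of $A$.

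For the first step I would invoke the analytic functional calculus. By \reflem{2}, the point $2^{-n}$ is an isolated point of $\spec_A(g) = \{0\} \cup \{2^{-m} : m \in \bN\}$, with associated spectral idempotent equal to $e_n$. Choose a function $f_n$ holomorphic on a neighborhood $U_n$ of $\spec_A(g)$ such that $f_n \equiv 1$ on a small disk around $2^{-n}$ and $f_n \equiv 0$ on a neighborhood of the closed set $\{0\} \cup \{2^{-m} : m \neq n\}$; this separation is possible because $\{2^{-n}\}$ is a clopen subset of the compact set $\spec_A(g)$. Since $f_n(0) = 0$, the Riesz--Dunford integral defining $f_n(g)$ produces an element lying in the closed subalgebra of $A$ (not just $A^1$) generated by $g$. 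By the uniqueness of the spectral idempotent attached to an isolated point of the spectrum, $f_n(g) = e_n$. Therefore the closed subalgebra of $A$ generated by $g$ contains every $e_n$ and also $g$ itself, hence contains $A_0$, and hence equals $A$.

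For the characters, let $\chi \colon A \to \Cdb$ be any character. Then $\chi(g) \in \spec_A(g) = \{0\} \cup \{2^{-n} : n \in \bN\}$. Since $g$ topologically generates $A$, the continuous homomorphism $\chi$ is determined by its value on $g$. The case $\chi(g) = 0$ is impossible: it would force $\chi$ to vanish on every polynomial in $g$ without constant term, hence on the closure of this set, which is all of $A$, contradicting $\chi \neq 0$. Thus $\chi(g) = 2^{-n}$ for some $n \in \bN$, and since $\chi_n(g) = 2^{-n}$ as well, we must have $\chi = \chi_n$. Combined with the observation from the proof of \reflem{2} that each $\chi_n$ really is a character, this gives the claimed identification.

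Nothing here is a serious obstacle: both parts rely on facts already packaged by \reflem{2}, namely that $\spec_A(g)$ is exactly the asserted null sequence plus zero, that each nonzero spectral point is isolated, and that its Riesz spectral idempotent coincides with $e_n$. The only small point of care is to ensure $f_n(0) = 0$, so that the functional calculus element lands in the non-unital algebra $A$ rather than merely in its unitization $A^1$.
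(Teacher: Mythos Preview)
Your proof is correct and follows essentially the same route as the paper's. The only cosmetic difference is in how you land $e_n$ inside the non-unital closed algebra ${\rm oa}(g)$: the paper first gets $e_n\in{\rm oa}(1,g)$ from the functional calculus and then uses the identity $e_n=2^n g\,e_n$ to pull it into ${\rm oa}(g)$, whereas you arrange $f_n(0)=0$ so that $f_n(g)=g\,h_n(g)\in g\cdot{\rm oa}(1,g)\subset{\rm oa}(g)$ directly; these are the same trick in different clothing. Your identification of the characters is likewise the same argument the paper invokes (via the proof of Corollary~\ref{finok}), just spelled out explicitly.
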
 
\begin{proof}  
 The spectral idempotents  $e_n$ 
are, by the functional calculus, 
in the closed algebra ${\rm oa}(g)$ generated by 
$g$ (note that $e_n \in {\rm oa}(1,g)$ by e.g.\ \cite[Theorem 2.4.4 (ii)]{Dales},
but $e_n = 2^n g e_n$, so that $e_n$ is in ${\rm oa}(g)$).   
Together with $g$ itself, these idempotents generate $A_0$ algebraically; so $A$ is singly generated by $g$.
As in the proof of Corollary \ref{finok}  
the characters  $\chi_n$ constitute the character space of $A$. 
\end{proof}

\begin{remark}
It is easy to find operator algebra norms so that (1)--(4) 
at the start of Section \ref{gco} hold 
with the exception of $A$ having a cai.
The last example in  the proof of Theorem \ref{finok2} is as such.
Also the operator algebra norm in \cite[Example 4.30]{ABS} can easily be seen to work (with the help of the last two results).
\end{remark}

\section{Maximal norms}
\l e!  

\begin{definition} \label{s0} 
 Let a strictly increasing sequence $({a}_n)_{n=1}^\infty\subset\bN$ be given.  Set $a_0 = 1$, and define a subset $S_0\subset c_0$ as follows:
\end{definition} 
\begin{equation} \label{s0e} S_0=\{g, a_n^{-1}e_n, \frac n{n+1}g_{{a}_n}^{a_n},  n(g_{{a}_n}^{a_n}\cdot g-g), n^{a_n}g^{a_n}(1-P_{{a}_n}):n\in\bN\}.
  \end{equation} 
 Also, let $S$ be the collection of all finite products of elements of $S_0$.

Note: later on, we are going to impose growth conditions on this underlying sequence.
Our main results will happen provided that the underlying sequence $(a_n)$ increases sufficiently rapidly.
But for now, we note that $S_0$ includes both $g$ and nonzero multiples of the unit vectors $e_n$, 
so $S_0$ generates $A_0$ algebraically; indeed the  linear span of $S$ is $A_0$.
Thus, we get a finite seminorm on $A_0$ if we define 
\begin{equation} \label{4-1}
  {\nm x}_{\max} =\inf\{\summ i1n|\lam_i|: x=\summ i1n\lam_is_i: n\in\bN, \lam_i\in\bC, s_i\in S\}.  
\end{equation}
It will eventually transpire  that, given growth conditions on the $a_n$,
the completion of $A_0$ in the norm $\Nm_{\max}$ gives a Banach algebra satisfying 
(1)--(3) at the start of Section \ref{gco}, and that the quotient of this by the radical 
satisfies (1)--(4) there.  

\blem\l 4! Let $S$, $S_0$ be defined as in \refdfn{s0}. Then the seminorm  $\Nm_{\max}$ in {\rm \refeq{4-1}} is a norm greater than or equal to the $c_0$-norm $\Nm_0$.   Indeed 
$\Nm_{\max}$ is the largest seminorm on $A_0$ such that $\nm s_{\max}\le 1$ for all $s\in S$; 
and $\Nm_{\max}$ is equal to the largest algebra seminorm on $A_0$ such that $\nm s_{\max}\le 1$ for all $s\in S_0$. 
If $\Nm$ is any algebra norm on $A_0$ with 
$\Nm_0\le \Nm\le \Nm_{\max}$, then writing $A$ for the completion of $(A_0,\Nm)$,  
$A$ has a cai, the spectrum of $g\in A$ is precisely $\{2^{-n}:n\in\bN\}\cup\{0\}$,
and the spectral idempotents for the eigenvalues $2^{-n}$, obtained from the analytic functional calculus
for $g$, are the unit vectors $e_n$.  Finally, $A$ is singly generated by $g$.
\elem
\proof  Expression \refeq{4-1} is at least the $c_0$-norm if and only if every $x\in S$ has $c_0$-norm at most 1, if and only if every $x\in S_0$ has $c_0$-norm at most 1. 

Looking at the definition of $S_0$, it is obvious that the vectors  $g$ and  $a_n^{-1}e_n$  and $\frac n{n+1}g_{{a}_n}^{a_n}$ have $c_0$-norm at most 1; also  the $l^\infty$-norm of $g_{{a}_n}^{a_n}-1$ is 1, but its $j$th entry is zero 
for $j\le{a}_n$, hence the $c_0$-norm of $(g_{{a}_n}^{a_n}\cdot g-g)$ is at most $\nm{g(1-P_{{a}_n})}_0=$ $2^{-(1+{a}_n)}<\frac 1n$.  So $\nm{n(g_{{a}_n}^{a_n}\cdot g-g)}_0<1$  also. 
 Finally the $c_0$-norm of $g^{a_n}(1-P_{{a}_n})$ is $2^{-a_n(1+{a}_n)}$ so since $a_n\ge n$,
the  $c_0$-norm of $n^{a_n}g^{a_n}(1-P_{{a}_n})$ is at most 1 also. Thus $\nm x_{0}\le 1$ for every $x\in S_0$, and $\nm x\ge \nm x_{0}$ for every $x\in A_0$. 

Let $\Nm$ be any algebra seminorm on $A_0$ such that $\nm s\le 1$ for all $s\in S_0$. Then plainly we have $\nm s\le 1$ for all $s\in S$. And given $\Nm$ is a norm such that $\nm s\le 1$ for all $s\in S$, plainly we must have $\nm s\le \nm s_{\max}$ as given in \refeq{4-1}.

Every element in the set $S$ has $\Nm_{\max}$ norm at most 1, so $\Nm_{\max}$ is indeed the maximal seminorm with this property, as claimed in the lemma. Also, if $\Nm$ is any algebra norm with $\Nm_0\le\Nm\le\Nm_{\max}$, we have 
\begin{equation} \label{e1}
\textstyle{\nm{g}\le 1, \; \; \; \; \;  \textrm{and} \; \;  \; \nm{e_n}\le a_n}
\end{equation}
for each $n\in\bN$; also
$$\textstyle{\nm{g_{{a}_n}^{a_n}}\le 1+\frac 1n;} \; \; \; \textstyle{\nm{g_{{a}_n}^{a_n}\cdot g-g}\le \frac 1n; \; \; \;  \textrm{and}} \; \; 
\textstyle{\nm{g^{a_n}(1-P_{{a}_n})}\le n^{-a_n}.}$$
By \reflem{1}, the vectors $x_n=\frac n{n+1}g_{{a}_n}^{a_n}$ are a cai for $A$.
By \reflem{2},  the spectrum of $g\in A$ is precisely $\{2^{-n}:n\in\bN\}\cup\{0\}$,
and the spectral idempotents for the eigenvalues $2^{-n}$, obtained from the analytic functional calculus
for $g$, are  the unit vectors $e_n$. 
By \refcor{3}, $A$ is topologically generated by $g$.
\endproof

\begin{corollary} \label{c4}
  Let $\Nm$ be an algebra norm on $A_0$ with $\Nm_0\le \Nm\le \Nm_{\max}$, and let $A$ be the completion of $(A_0,\Nm)$, and $J$ the Jacobson radical of $A$. Let $q:A\to A/J$ be the quotient map.
Then the norm $\Tnm$ on $A_0$ with $\tnm x=\nm{q(x)}$ also satisfies $\Nm_0\le \Tnm\le \Nm_{\max}$, and the conclusions of  {\rm  \reflem{4}}\ are also satisfied when $A$ is replaced by the semisimple Banach algebra $A/J$. 
\end{corollary}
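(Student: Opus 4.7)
The plan is to check the two norm inequalities bounding $\Tnm$ and then invoke \reflem{4} applied to $\Tnm$. The observation making this work is that the completion of $(A_0,\Tnm)$ is canonically $A/J$, so all the conclusions of the lemma transport directly.

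First I would verify that $\Tnm$ is an algebra seminorm on $A_0$: this is immediate since it is the pullback of the algebra norm on $A/J$ via the algebra homomorphism $x \mapsto q(x)$. The upper bound $\Tnm \le \Nm_{\max}$ is painless: the quotient map is contractive, so $\tnm x = \nm{q(x)} \le \nm x \le \nm x_{\max}$ for all $x \in A_0$.

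The heart of the argument is the lower bound $\Nm_0 \le \Tnm$, which will simultaneously establish that $\Tnm$ is a norm (not just a seminorm). For each $n \in \bN$ the coordinate functional $\chi_n(x) = x_n$ is a character of $A_0$ satisfying $|\chi_n(x)| \le \nm x_0 \le \nm x$, so it extends by continuity to a character $\chi_n$ of the commutative Banach algebra $A$. Commutativity of $A$ (inherited from $A_0$, hence preserved under completion) guarantees that the Jacobson radical $J$ is the intersection of the kernels of all characters, so in particular $\chi_n$ vanishes on $J$ and factors as $\chi_n = \bar\chi_n \circ q$ for a character $\bar\chi_n$ on $A/J$ of norm at most $1$. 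Then for $x \in A_0$,
\[
|x_n| = |\chi_n(x)| = |\bar\chi_n(q(x))| \le \nm{q(x)} = \tnm x,
\]
and taking the supremum over $n$ gives $\nm x_0 \le \tnm x$. In particular $\tnm x = 0$ forces $x = 0$, so $\Tnm$ is an algebra norm on $A_0$ with $\Nm_0 \le \Tnm \le \Nm_{\max}$.

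It remains to identify the completion of $(A_0,\Tnm)$ with $A/J$ and then quote \reflem{4}. Since $q|_{A_0}$ is injective (by the previous paragraph) and isometric from $(A_0,\Tnm)$ into $A/J$, the completion embeds isometrically in $A/J$; and since $A_0$ is dense in $A$ and $q$ is continuous and surjective, $q(A_0)$ is dense in $A/J$, so the completion is all of $A/J$. Now \reflem{4} applied to the norm $\Tnm$ yields that $A/J$ has a cai, that the spectrum of $g$ in $A/J$ is $\{2^{-n}:n\in\bN\}\cup\{0\}$ with spectral idempotents the $e_n$, and that $A/J$ is singly generated by $g$. Semisimplicity of $A/J$ is automatic since $J$ is the Jacobson radical of $A$.

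The only step requiring any care is the extension-and-factoring of the characters $\chi_n$ through $A/J$; once that is in hand, everything reduces to a routine reapplication of \reflem{4}.
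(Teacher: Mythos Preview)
Your proof is correct and follows essentially the same route as the paper: both establish $\Tnm\le\Nm_{\max}$ trivially via contractivity of $q$, obtain $\Nm_0\le\Tnm$ by factoring the coordinate characters $\chi_n$ through $A/J$ (since $J$ lies in every character kernel), and then reapply \reflem{4} to the norm $\Tnm$. Your version is slightly more explicit about identifying the completion of $(A_0,\Tnm)$ with $A/J$, but the argument is the same.
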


\proof   Since $A$ satisfies the conditions of the previous result, and therefore 
of Corollary \ref{3} too, the characters on $A$ are the $\chi_n$ mentioned there.
Of course $J$ is the intersection of the kernels of the characters of $A$, hence
$A_0 \cap J = (0)$.  Therefore the quantity $\tnm x=\nm{q(x)}$  is a norm on $A_0$, and it is
dominated by $\Nm_{\max}$ clearly.
It is only necessary to show that $\tnm x\ge \nm x_0$; for then $A/J$ is the completion of $(A_0,\Tnm)$ and the conclusions of \reflem{4}\ will follow for the norm $\Tnm$. But 
$\tnm x$ dominates $\sup\{|\chi(x)|:\chi$ is a character of $A\}\ =\nm x_0$. \endproof

\section{A general (Banach algebraic)  theorem}

Write  $\Del_n=P_{{a}_{n+1}}-P_{{a}_n}$, and  $$H_n=\lin\{e_j:{a}_n<j\le{a}_{n+1}\}=\Del_nA_0.$$
A basis for the dual of this vector 
space is the set of characters $\{\chi_j:\range j{{a}_n+1}{{a}_{n+1}}\}$. For these $j$, we have $\chi_j=\chi_j\circ \Del_n$. 

 We have the following general theorem, which is part of what we need,  but it does not necessarily give an operator algebra, merely a Banach algebra.  However  we will use it later to get an operator algebra.

\begin{theorem} \label{sec}
 Let a strictly increasing sequence $({a}_n)$ be given, and let $A$ be the completion of $A_0$ in an algebra
norm $\Nm$, where $\Nm_0\le \Nm\le {\Nm}_{\max}$, with $\Nm_0$ being the $c_0$-norm, and $\Nm_{\max}$ the maximal norm as defined in {\rm \refeq{4-1}}. 
Suppose in addition that there is a bounded sequence $(\psi_n)_{n=1}^\infty\in A^*$ such that 
$\psi_n(g)=1$ and $\psi_n=\psi_n\circ \Del_n$ for each $n$. 
Let $B=A/\rad A$. Then $B$ has the following properties.
\begin{enumerate}
\item $B$ is a Banach algebra with cai, topologically generated by $g$.
\item $g$ is a contraction with spectrum  $\{2^{-n}:n\in\bN\}$;
\item $g\notin \overline{{\rm lin}} \, \{e_i:i\in\bN\}$ and 
\item $B$ is semisimple.
\end{enumerate}
\end{theorem}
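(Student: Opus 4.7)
Parts (1), (2), and (4) come almost for free from Corollary \ref{c4}: that corollary produces a quotient norm on $A_0$ still sandwiched between $\Nm_0$ and $\Nm_{\max}$, so the conclusions of Lemma \ref{4} transfer verbatim to $B = A/\rad A$. This yields the cai and topological generation by $g$ of (1), the spectrum and contractivity of $g$ in (2) (using $g \in S_0$, so $\nm{g}_{\max}\le 1$), and (4) is immediate from the definition of $B$.

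The whole substance is therefore (3), and the strategy is to build a bounded functional on $B$ that sees $g$ but annihilates every $e_i$. The key observation is that the new hypothesis $\psi_n = \psi_n \circ \Del_n$ forces each $\psi_n$ to vanish on $\rad A$. Indeed, $H_n$ is finite-dimensional and spanned by the idempotents $e_j$, $a_n < j \le a_{n+1}$, whose associated characters $\chi_j$ separate $H_n$; hence $H_n \cap \rad A = \{0\}$. Since $\rad A$ is an ideal and $\Del_n A \subset H_n$ (by continuity of multiplication by $\Del_n$, and finite-dimensionality of $H_n$), we conclude $\Del_n \cdot \rad A = \{0\}$, so $\psi_n(r) = \psi_n(\Del_n r) = 0$ for every $r \in \rad A$. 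Consequently each $\psi_n$ descends to $\tilde\psi_n \in B^*$ with $\tilde\psi_n \circ q = \psi_n$, and the sequence $(\tilde\psi_n)$ remains uniformly bounded, with $\tilde\psi_n(q(g)) = 1$ and $\tilde\psi_n(q(e_j)) = 0$ whenever $j \notin (a_n, a_{n+1}]$.

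To finish, I would pass by Banach--Alaoglu to a weak-$*$ cluster point $\Psi \in B^*$ of $(\tilde\psi_n)$. Since $\tilde\psi_n(q(g)) = 1$ for every $n$, we get $\Psi(q(g)) = 1$; for each fixed $j$ the scalars $\tilde\psi_n(q(e_j))$ are eventually zero, so $\Psi(q(e_j)) = 0$. Thus $\Psi$ vanishes on $\overline{\mathrm{lin}}\{q(e_j) : j \in \bN\}$ but not on $q(g)$, establishing (3). The main obstacle I anticipated was that $\psi_n$ only lives in $A^*$, and nothing in the hypotheses overtly prevents it from picking up contributions from $\rad A$; the elegant resolution is that the factorization through the finite-dimensional semisimple piece $H_n$ automatically kills the radical.
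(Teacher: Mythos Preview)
Your proposal is correct and follows essentially the same route as the paper's proof: parts (1), (2), (4) via Corollary~\ref{c4}, and part (3) by taking a weak-$*$ cluster point of the $\psi_n$ after checking they annihilate $\rad A$. The only cosmetic difference is in that check: the paper observes that $\psi_n=\psi_n\circ\Del_n$ forces $\psi_n\in\lin\{\chi_j:a_n<j\le a_{n+1}\}$ (since the $\chi_j$ span the dual of the finite-dimensional $H_n$) and characters kill the radical, whereas you argue directly that $\Del_n\cdot\rad A\subset H_n\cap\rad A=\{0\}$; these are two phrasings of the same fact.
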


\proof That $B$ has a cai, and the spectrum of $g$ is as stated, and that $B$ is topologically generated by $g$, follows from \refcor{c4}. Certainly $B=A/\rad A$ is semisimple, so it remains to show that 
$g\notin\overline\lin\{e_i:i\in\bN\}$. To this end, let $\psi\in A^*$ be any weak-* accumulation point of the (by hypothesis bounded) sequence $\psi_n$. Since $\psi_n=\psi_n\circ\Del_n$, we have $\psi_n\in\lin\{\chi_j:{a}_n<j\le{a}_{n+1}\}$.  Hence each $\psi_n$ annihilates $\rad A$, so $\psi$ annihilates $\rad A$, and yields a well defined element of $B^*$. Since $\psi_n(e_j)=0$ for $j\le{a}_n$ and ${a}_n\to\infty$, we have $\psi(e_j)=0$ for all $j\in\bN$. But  $\psi_n(g)=1$ for all $n$, so $\psi(g)=1$. Therefore    $g\notin\linbar\{e_i:i\in\bN\}$. 
\endproof

\section{Representations on Hilbert space} \label{hilr}

It is time to construct norms which will  presently turn $A_0$ into a nontrivial operator algebra.  These norms will be somewhat 
universal, in the sense that they are defined so as to encode  abstractly the critical hypotheses in the Lemmas in Section \ref{gco},
 which ensure that conditions (1) and (2) at the start of that Section 
hold (after which we will proceed to prove that (3) and (4) also hold).

For $k\in\bN_0$, write  $\gam_k\up n =$  $\summ j{1+{a}_n}{{a}_{n+1}}2^{-jk}e_j$.    In the rest of our paper we will very
often silently use the relation $$\gam_k\up n \gam_i\up n = \gam_{k+i}\up n .$$ The reader can check the following relations: 
We have  $\Del_ng=\gam_1\up n$, and 
$\Del_ng_{{a}_j}^{a_j}=2^{a_j^2}\gam_{a_j}\up n$ if $j\le n$, while  $\Del_ng_{{a}_j}^{a_j}= \gam_0\up n$ if $j>n$.  So
$\Del_ng_{{a}_j}^{a_j}\cdot g-g=2^{a_j^2}\gam_{1+a_j}\up n-\gam_1\up n$ if $j\le n$, and is zero if $j>n$.  Similarly, 
$\Del_ng^{a_j}(1-P_{{a}_j})=\gam_{a_j}\up n$ if $j\le n$,  and is  zero if $j>n$.
Let us also write $$\Lam_n=\{\summ i1nt_ia_i:t_i\in\bN_0,t_i\le{a}_i \; \text{for} \; \range 1in\},$$ and 
$$\xi_n=\max\Lam_n=\summ i1n{a}_i^2.$$
We shall assume that the sequence $(a_n)$ increases sufficiently fast that 
these sums are distinct for distinct sequences $(t_i)$, and that they appear in ``lexicographic order''. So, we assume that,  for $t_i\in\bZ$, $|t_i|\le 2a_i$, we have $\summ i1n t_ia_i> 0$ if and only if 
$t_r>0$, where $r=\max\{j:t_j\ne 0\}$. This condition, slightly stronger than our immediate need, will also ensure that elements of the set $\Lam_n-\Lam_n=\{\summ i1n t_ia_i:-a_i\le t_i\le a_i\}$ also appear in ``lexicographic'' order.

We note that any collection of $m$ of the vectors $\gam_k\up n$ is linearly independent, provided that $m\le{a}_{n+1}-{a}_n$.  
So a linear functional $\phi\in\lin\{\chi_j:{a}_n<j\le {a}_{n+1}\}$ is specified uniquely by its action on $\{\gam_k\up n:0\le k<{a}_{n+1}-{a}_n\}$. Let $\phi_n$ be the unique such functional such that for all $0\le k<{a}_{n+1}-{a}_n$,
\begin{equation} \label{phin}  \phi_n(\gam_k\up n)=\bcas
\prodd i1n 2^{-t_i{a}_i^2}(1-t_i/{a}_i)&\text{if } k=1+\summ i1n t_ia_i\in 1 + \Lam_n.\\
0&\text{otherwise.}
\ecas
\end{equation}
Here the $t_i \in \bN_0$ with $t_i \leq a_i$ of course, 
and $\phi_n(\gam_1\up n)=1$.   We may view $\phi_n$ as a functional on $A$ satisfying
$\phi_n=\phi_n \circ \Del_n$.  
Therefore from \refeq{phin}\ we have that
$$\phi_n(g)=\phi_n(\Del_ng)=\phi_n(\gam_1\up n)=1.$$
However, for $j\le{a}_n$ we have $\phi_n(e_j)=0$. We could go on from here and prove directly that the $\phi_n$ are uniformly $\Nm_{\max}$-bounded, whereupon we could apply Theorem \ref{sec}; but this would not get us an operator algebra. Instead we proceed as follows.

We have established that $\Del_ng=\gam_1\up n$, and 
$\Del_ng_{{a}_j}^{a_j}=2^{a_j^2}\gam_{a_j}\up n$ if $j\le n$, while $\Del_ng_{{a}_j}^{a_j}= \gam_0\up n$  if $j>n$) 
Also, $\Del_ne_j=e_j$ if ${a}_n<j\le{a}_{n+1}$, and is zero otherwise. Now $\gam_0\up  n$ is the identity of 
$\Del_nA_0$, so referring to \refeq{s0e}, and removing from $\Del_nS_0$ positive scalar multiples of the identity or of 
other elements of $\Del_nS_0$, we are left with the set
\begin{equation} \label{dns0} 
\{\gam_1\up n, a_i^{-1}e_i, \frac j{j+1}2^{a_j^2}\gam_{a_j}\up n,
j(2^{a_j^2}\gam_{1+a_j}\up n-\gam_1\up n):{a}_n<i\le{a}_{n+1}, 1\le j\le n\}.
\end{equation}
\begin{definition} \label{hn}
  The set given by {\rm \refeq{dns0}}\ will be called $S_0\up n$.  Let $\cI\up n$ denote the set of all ``index functions'' $\bi:S_0\up n\to\bN_0$. 
For $\bi\in\cI\up n$, write $\bs^\bi$ for the product $\prod_{s\in S_0\up n}s^{\bi(s)}$.
 Equip $H_n$ with a Euclidean seminorm $\Nm_2\up n$ as follows. For $x\in H_n$, we
define 
\begin{equation} \label{nm2}
{\nm x}_2\up n=(\sum_{\bi\in\cI\up n}|\phi_n(\bs^\bi x)|^2)^{1/2}.
\end{equation} \end{definition}  

We shall establish that ${\nm x}_2\up n$ is finite,  so that we do indeed have a Euclidean seminorm.   Letting ${\mathcal
H}_n$ denote 
the associated Euclidean  space, we shall represent each $T\in A_0$ in $B({\mathcal
H}_n,\Nm_2\up n)$ by its compression
$\Del_nT$.  This representation will be called $\rho_n:A_0\to B({\mathcal
H}_n,\Nm_2\up n)$, and the operator norm
$\nm{\rho_n(T)}$ will be called $\opnmn T$.  
  We then define on $A_0$ the quantity
\begin{equation} \label{opnm}
  \nm T=\sup_{n \in \bN_0} \;  \opnmn T,
\end{equation}
where $\Vert T \Vert_{\rm op}^{(0)} = \nm T_{0}$.   We shall show that this is a norm.

The basic Lemma we shall prove is as follows:

\blem\l nm21! 
For every $x\in H_n$, we have ${\nm x}_2\up n<\infty$, provided that our underlying sequences satisfy certain growth conditions.
The representation $\rho_n$ above on ${\mathcal
H}_n$ is well defined, and the operator norm $\Vert T \Vert^{(n)}_{{\rm op}}
\le 1$ for every $T\in S_0$.
\elem

\proof   
 Consider $H_n$  as an algebra with pointwise product.  The spectral radius here is the $c_0$-norm, and for the various elements of $S_0\up n$ it is as follows: $$\nm{\gam_1\up n}_0=2^{-1-{a}_n} \; , \; \; \;  \nm{a_i^{-1}e_i}_0=a_i^{-1}, \; \; \; \nm{\frac j{j+1}2^{a_j^2}\gam_{a_j}\up n}_0=
\frac j{j+1}2^{a_j({a}_j-{a}_n-1)},$$
and $$\nm{j(2^{a_j^2}\gam_{1+a_j}\up n-\gam_1\up n)}_0
\le j(2^{a_j({a}_j-{a}_n)}+2^{-1-{a}_n}).$$ A mild growth condition on the $(a_j)$  will ensure that for all $n$,
$$2^{-1-{a}_n}+\summ i{{a}_n+1}{{a}_{n+1}}a_i^{-1}+\summ j1n (j+1)(2^{a_j({a}_j-{a}_n)}+2^{-1-{a}_n})<1.$$
In particular,  the spectral radii of the elements of $S_0\up n$ are then all strictly less than 1.
 It is a nice exercise that on any commutative algebra with a finite set of generators of spectral radius
$<1$, we can pick an algebra norm such
that $||s||<1$  for all generators $s$
simultaneously (Hint: let $G_0$ be the set of generators, each multiplied by $1+ \epsilon$, such that the semigroup $G$ generated by $G_0$
is bounded.   Renorm $A$ in the usual way so that $G \subset {\rm Ball}(A)$,
see e.g.\ \cite[Proposition 1.1.9]{Pal}). With respect to such a norm on $H_n$,
the square of the sum in  \refeq{nm2}\ is at most
$$\sum_{\bi} \prod_{s\in S_0 \up n} \, || s ||^{2 \bi(s)} \cdot ||\phi_n||^2   =
||\phi_n||^2 \prod_s (1-||s||^2)^{-1} < \infty.$$
So the expression ${\nm x}_2\up n$ in \refeq{nm2}\ is finite.

It is clear from \refeq{nm2}\ that for $s\in S_0\up n$ and $x\in H_n$,
the terms in the sum defining  ${\nm {x}}_2\up n$ include all those in the sum
defining  ${\nm {sx}}_2\up n$ (plus certain extra terms, namely $|\phi_n(\bs^\bi x)|^2$ for index functions $\bi$ such that $\bi(s)=0$).
Therefore ${\nm {sx}}_2\up n\le  {\nm {x}}_2\up n$, and
$\opnmn s\le 1$ for $s\in S_0\up n$. And for $s\in S_0$, we have
$\opnmn s=\opnmn{\Del_ns}$, which is either a multiple of the identity $\gam_0\up n$ of magnitude
less than 1, or an element of $S_0\up n$ (again possibly  multiplied by a positive scalar of magnitude
less than 1).
Thus $\opnmn s\le 1$ for all $s\in S$,
 as required.  The last few lines also show that $\rho_n$ above is well defined.
 \endproof

\medskip

It will be much harder to prove the next result.  In fact
this proof will take up almost all of the rest
of our paper. 

\blem\l nm22!
Given growth conditions on our underlying sequences, we have 
$0<{\nm {\gam_0\up n}}_2\up n\le 3\cdot {\nm {\gam_1\up n}}_2\up n$ for all $n$.
We have  $\Vert g \Vert^{(n)}_{{\rm op}} \ge \frac 13$.
\elem

Taking this lemma on faith for now, the rest of our 
assertions follow rather easily:

\begin{theorem} \label{sac} 
Given growth conditions  on the $a_n$, the operator algebra norm defined in {\rm \refeq{opnm}}
 is at most $\Nm_{\max}$. The completion $A$ of $A_0$ in this norm is an operator algebra satisfying all of the conditions {\rm (1)--(4)} at the start of Section  {\rm  \ref{gco}}. \end{theorem}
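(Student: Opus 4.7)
The plan is as follows. First I would verify that the quantity $\Nm$ defined in \refeq{opnm} is an operator algebra norm on $A_0$ satisfying $\Nm_0\le\Nm\le\Nm_{\max}$. By \reflem{nm21}, for each $n\ge 1$ the map $\rho_n$ is a well-defined representation of $A_0$ on the Hilbert space $({\mathcal H}_n,\nmtn{\cdot})$ with $\opnmn{s}\le 1$ for every $s\in S_0$; together with the isometric representation $\rho_0$ of $(A_0,\nm{\cdot}_0)$ on $\ell^2$ by multiplication, these give a direct sum representation $\rho=\bigoplus_{n\ge 0}\rho_n$ on a Hilbert space whose operator norm recovers $\Nm$. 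Hence $\Nm$ is an operator algebra seminorm, and a norm since $\Nm\ge\Nm_0$. Since $\opnmn{s}\le 1$ for every $s\in S_0$ and every $n\ge 0$, the maximality characterization of $\Nm_{\max}$ in \reflem{4} forces $\Nm\le\Nm_{\max}$.

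Because $\Nm_0\le\Nm\le\Nm_{\max}$, \reflem{4} immediately delivers conditions (1) and (2) at the start of Section \ref{gco}, together with the fact that $g$ is a single generator. To establish semisimplicity (condition (4)), I would use \refcor{3} to identify the characters of $A$ as precisely the $\chi_j$ ($j\in\bN$), each continuous since $|\chi_j(T)|\le\nm{T}_0\le\nm{T}$; hence $\rad A=\bigcap_j\ker\chi_j$. For any $T\in\rad A$, every coordinate $\chi_j(T)$ vanishes, so $\Del_nT=\sum_{a_n<j\le a_{n+1}}\chi_j(T)e_j=0$ in ${\mathcal H}_n$ for each $n$. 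Since $\rho_n(T)$ acts by multiplication by $\Del_nT$, this forces $\opnmn{T}=0$ for $n\ge 1$, and likewise $\nm{T}_0=0$. Thus $\nm{T}=0$, so $T=0$ and $A$ is semisimple.

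It remains to verify condition (3), and for this the plan is to apply \refthm{sec}. Let $\sprod{\cdot}{\cdot}\up n$ denote the inner product on ${\mathcal H}_n$ that induces $\nmtn{\cdot}$, and define $\psi_n\in A^*$ by
\[
\psi_n(T)\;=\;\frac{\sprod{\Del_nT}{\gam_1\up n}\up n}{(\nmtn{\gam_1\up n})^2}.
\]
Then $\psi_n(g)=1$ and $\psi_n=\psi_n\circ\Del_n$ by construction. Applying Cauchy--Schwarz together with the operator bound $\nmtn{\Del_nT}=\nmtn{\rho_n(T)\gam_0\up n}\le\opnmn{T}\cdot\nmtn{\gam_0\up n}$ and the ratio bound of \reflem{nm22}, we obtain
\[
|\psi_n(T)|\;\le\;\nm{T}\cdot\frac{\nmtn{\gam_0\up n}}{\nmtn{\gam_1\up n}}\;\le\;3\,\nm{T}.
\]
So $(\psi_n)$ is uniformly bounded in $A^*$, and \refthm{sec}\ yields condition (3) for $B=A/\rad A$; by the semisimplicity established above this is condition (3) for $A$ itself. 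The principal obstacle in the whole theorem is \reflem{nm22}: the ratio bound $\nmtn{\gam_0\up n}\le 3\nmtn{\gam_1\up n}$ is precisely what forces uniform boundedness of $(\psi_n)$, and hence the separation of $g$ from $\linbar\{e_i\}$, and the proof of this bound (occupying the remainder of the paper) is by far the hardest step.
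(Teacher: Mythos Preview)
Your proposal is correct and follows essentially the same route as the paper: verify $\Nm_0\le\Nm\le\Nm_{\max}$ via \reflem{nm21} and \reflem{4}, deduce (1), (2) and single generation from \reflem{4}, obtain semisimplicity directly by observing that $\opnmn{T}=0$ whenever all $\chi_j(T)=0$, and then feed a bounded sequence $(\psi_n)$ with $\psi_n(g)=1$ and $\psi_n=\psi_n\circ\Del_n$ into \refthm{sec} to get (3). The only cosmetic difference is in producing $\psi_n$: the paper invokes the operator-norm lower bound $\opnmn{g}\ge\frac13$ from \reflem{nm22} and takes any norming functional in $(A_0,\Opnmn)^*$ (which automatically factors through $\Del_n$), whereas you write down the explicit Riesz functional $T\mapsto\sprod{\Del_nT}{\gam_1\up n}\up n/(\nmtn{\gam_1\up n})^2$ and bound it using Cauchy--Schwarz together with the ratio $\nmtn{\gam_0\up n}\le 3\,\nmtn{\gam_1\up n}$ from the same lemma; these are two readings of the same estimate, and your explicit form is arguably cleaner.
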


\proof   
The completion $A$ of $A_0$ in the norm $\Nm$ defined in {\rm \refeq{opnm}} will be an operator algebra whose norm lies between $\Nm_0$ and $\Nm_{\max}$ as in \reflem{4} (for $\Nm_{\max}$ is the largest norm on $A_0$ such that ${\nm s}_{\max}\le 1$ for all $s\in S_0$). Furthermore, since $\opnmn g\ge \frac 13$
for each $n$ by Lemma \ref{nm22}, there is a linear functional $\psi_n\in(A_0,\Opnmn)^*$
such that $\nm{\psi_n}\le 3$ and $\psi_n(g)=1$.
Finally, $A$ is semisimple (there is no need to quotient out by $\rad A$), because
the operator norm $\opnmn T$ is zero unless one of the characters $\chi_j(T)\ne 0$ for some $j$ with
$\alp_n< j\leq \alp_{n+1}$.
Thus  $(A,\Nm)$ satisfies the requirements of  \refthm{sec}: but it is clearly an operator algebra.
 \endproof

\begin{corollary} \label{maxse}
  If  $A_{\rm max}$ is the completion
of $A$ in the maximal norm defined in {\rm \refeq{4-1}},
and given growth conditions on the $a_n$,  then $A_{\rm max}$ (resp.\ $A_{\rm max}/ \rad A_{\rm max}$) is a Banach algebra satisfying the desired
conditions  {\rm (1)--(3)} (resp.\  {\rm  (1)--(4)}) at the start of Section  {\rm \ref{gco}}.  \end{corollary}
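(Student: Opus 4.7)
The plan is to combine Theorem~\ref{sac} with Theorem~\ref{sec}. First I would apply Lemma~\ref{4} directly to the norm $\Nm_{\max}$ itself, which trivially satisfies $\Nm_0 \le \Nm_{\max} \le \Nm_{\max}$. This immediately gives that $A_{\max}$ has a cai, is topologically generated by the single element $g$, and that the spectrum of $g$ in $A_{\max}$ is $\{2^{-n}:n \in \bN\} \cup \{0\}$ with the $e_n$ as the associated minimal spectral idempotents. In other words, conditions (1) and (2) at the start of Section~\ref{gco} hold for $A_{\max}$ with no further work.

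The substantive step is condition (3). For this I would appeal to Theorem~\ref{sec}, whose only nontrivial hypothesis beyond what we already have is a uniformly bounded sequence $(\psi_n) \subset A_{\max}^*$ with $\psi_n(g) = 1$ and $\psi_n = \psi_n \circ \Del_n$. Here the Hilbert-space construction of the previous section pays off: the proof of Theorem~\ref{sac} produces, under suitable growth conditions on $(a_n)$, an operator-algebra norm $\Nm$ on $A_0$ with $\Nm \le \Nm_{\max}$ and functionals $\psi_n \in (A_0, \Nm)^*$ satisfying exactly these relations with $\nm{\psi_n} \le 3$. Because $\Nm \le \Nm_{\max}$ on $A_0$, the unit ball of $\Nm_{\max}$ sits inside that of $\Nm$, so the dual-norm inequality reverses and
\[ \nm{\psi_n}_{A_{\max}^*} \le \nm{\psi_n}_{(A_0, \Nm)^*} \le 3. \]
The identities $\psi_n(g) = 1$ and $\psi_n = \psi_n \circ \Del_n$ are purely algebraic and so transfer without change.

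Theorem~\ref{sec} then delivers (1)--(4) for the semisimple quotient $B := A_{\max}/\rad A_{\max}$, which is the second half of the corollary. For (3) on $A_{\max}$ itself, I would simply observe that it is inherited from the quotient: if $g \in \overline{\lin}\{e_i : i \in \bN\}$ in $A_{\max}$, then applying the contractive quotient map would place $[g]$ in $\overline{\lin}\{[e_i]\}$ inside $B$, contradicting (3) in $B$ (the $[e_i]$ remain nonzero, being the minimal spectral idempotents of $[g]$ under the functional calculus). I do not anticipate any serious obstacle; the single point to handle carefully is the direction of the dual-norm inequality when passing from $\Nm$ to the larger norm $\Nm_{\max}$, and checking that the algebraic relations defining $\psi_n$ survive the change of norm, which they do trivially since they are identities on the common dense subalgebra $A_0$.
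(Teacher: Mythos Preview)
Your proof is correct and follows essentially the same line as the paper's: apply Lemma~\ref{4} to $\Nm_{\max}$ for (1)--(2), transport the functionals $\psi_n$ from the proof of Theorem~\ref{sac} to $A_{\max}^*$ via the inequality $\Nm\le\Nm_{\max}$ (equivalently, compose with the canonical contraction $A_{\max}\to A$), and then invoke Theorem~\ref{sec}. The one cosmetic difference is that for (3) on $A_{\max}$ you lift from the quotient $B$, whereas the paper points directly to the proof of Theorem~\ref{sec}, since the separating functional $\psi$ already lies in $A_{\max}^*$; both arguments are valid and equally short.
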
 

\proof  By Lemma \ref{4} and Corollary \ref{c4}, $A_{\rm max}$ (resp.\ $A_{\rm max}/ \rad A_{\rm max}$) 
satisfy (1)--(2) (resp.\ (1)--(2) and (4)).  If $\psi_n$ is as in the proof of
Theorem \ref{sac}, and if $i : A_{\rm max} \to A$ is the canonical contraction
due to $\Nm_{\max}$ being a larger norm, then $\psi_n \circ i$ satisfies the conditions
of Theorem \ref{sec} with $A$ replaced by $A_{\rm max}$.  Then Theorem \ref{sec} 
or its proof implies that (3) also holds.  \endproof

\begin{remark}
In a previous draft Corollary \ref{maxse}  was proved directly, but for reasons of space this
(very lengthy) computation has been omitted.
\end{remark}

\section{Additional properties of our algebras $A$ and $\overline{A_{00}}$}  \label{Addpro}

In this section we take Lemma \ref{nm22} on faith, so that our main theorems above hold for our algebra $A$.

Let $H = \ell^2 \oplus (\bigoplus^2_{n\in \Ndb} \, {\mathcal
H}_n)$, and 
write $\rho$ for the representation of $A$ that we have already 
constructed.  Namely
$$\rho(a) = \Gamma(a) \oplus (\bigoplus_{n\in \Ndb} \, \rho_n(a)) .$$
Here $\Gamma$ is the Gelfand transform, mapping into $c_0$, but with 
$c_0$ viewed 
as `diagonal' operators on $\ell^2$ in the usual way.
Note that $\rho$ is initially defined on $A_0$, and is 
easily seen to be a nondegenerate
representation of $A_0$.  The norm on $A$ was defined in such a way that 
$\rho$ extends to a (completely) isometric representation of $A$, which we will
continue to write as $\rho$, and which is still nondegenerate.

\blem\l noca!  The algebra $\overline{A_{00}}$ has a contractive 
 approximate identity.   Also $A$ is a subalgebra of the closure of
$\overline{A_{00}}$
 in the strong operator 
topology of $B(H)$ for $H$ as above.  \elem
\proof  Indeed $(\rho(P_{a_{k+1}}))$ is a cai for $A_{00}$,
and hence also for $\overline{A_{00}}$, since $\rho_n(P_{a_{k+1}})$ is just $I_{{\mathcal
H}_k}$ if
$k \geq n$, and is zero for $k < n$.  

Clearly $\rho(P_{a_{k+1}}) \to I$ strongly on $H$, and
hence $\rho(g P_{a_{k+1}}) = \rho(g) \rho(P_{a_{k+1}})  \to \rho(g)$ strongly.
\endproof

\medskip

Hence $\overline{A_{00}}$ is a (complete) $M$-ideal in its bidual by \cite[Theorem 4.8.5]{BLM}, with all the consequences that this
brings (see e.g.\ \cite[Chapter 3]{HWW} and \cite[Theorem 5.10]{ABR}).  
Note that the explicit representation 
$\rho$ of $\overline{A_{00}}$ given above
 shows that  $\overline{A_{00}}$ is a subalgebra of the compact operators on $H$
(since each $\rho(e_n)$ is compact on $H$).  
The characters on $\overline{A_{00}}$ are again the $\chi_n$ above of course, 
since for any such character $\chi$ we must have $\chi(e_n) \neq 0$ for some $n$,
and  then 
$\chi(e_m) = 0$ for all other $m \in \Ndb$, so that $\chi = \chi_n$.  The spectrum of $\overline{A_{00}}$ is thus the same as 
the spectrum of $A$ (and equals the spectrum of $\overline{A_{00}}^{**}$ by a point above).  
Note that the $\rho(P_{a_{k+1}})$ above is a (contractive) spectral resolution of the identity.

\begin{corollary} \label{finlyw}  The operator algebra $A$ constructed in Section {\rm 5} 
is not compact or weakly compact.  Thus 
$A$ is not an ideal in its bidual.
\end{corollary}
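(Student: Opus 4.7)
\textbf{Plan for Corollary \ref{finlyw}.} The plan is to prove the stronger statement that $A$ is not an ideal in its bidual $A^{**}$; by the equivalence recorded in Theorem \ref{ch} (cf.\ \cite[1.4.13]{Pal} and \cite[Lemma 5.1]{ABR}), this is equivalent to $L_g$ not being weakly compact on $A$. Since $A$ is topologically singly generated by $g$ and since the weakly compact operators form a norm-closed two-sided ideal in $B(A)$, failure of weak compactness of $L_g$ will immediately yield failure of weak compactness of $L_a$ for \emph{some} $a \in A$, so $A$ is neither weakly compact nor compact.

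Assume for contradiction that $A$ is an ideal in $A^{**}$ (under the Arens product). The sequence $(P_{a_{k+1}})_{k}$ is bounded in $A$, since $\rho(P_{a_{k+1}}) = I_{{\mathcal H}_j}$ for $j \leq k$ and is zero otherwise, combined with $\|\Gamma(P_{a_{k+1}})\|_0 = 1$. By Banach--Alaoglu, pass to a subnet and let $\phi \in A^{**}$ be a weak-$*$ cluster point. The ideal hypothesis forces $\phi \cdot g \in A$, and a standard separate-weak-$*$-continuity argument for the Arens product gives that $P_{a_{k+1}} \cdot g \to \phi \cdot g$ weakly in $A$ along the subnet.

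The key step is now to identify $\phi \cdot g$ using the characters $\chi_m$. For each fixed $m \in \bN$, one has $\chi_m(P_{a_{k+1}}) = 1$ as soon as $a_{k+1} \geq m$, which holds for all sufficiently large $k$; hence $\chi_m(P_{a_{k+1}} \cdot g) = \chi_m(g) = 2^{-m}$ eventually. Taking the weak limit and using continuity of $\chi_m$ on $A$, we conclude $\chi_m(\phi \cdot g) = 2^{-m} = \chi_m(g)$ for every $m$. Since $A$ is semisimple (property (4)) and its character space is exactly $\{\chi_m : m \in \bN\}$ by Corollary \ref{3}, this forces $\phi \cdot g = g$ in $A$.

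On the other hand, $P_{a_{k+1}} \cdot g \in A_{00}$ for every $k$, so $g = \phi \cdot g$ lies in the weak closure of $A_{00}$. Since $\overline{A_{00}}$ is norm-closed and convex, Mazur's theorem identifies its weak closure with itself, giving $g \in \overline{A_{00}}$ --- in direct contradiction with property (3) in the conclusion of Theorem \ref{sac}. Therefore $A$ is not an ideal in $A^{**}$, $L_g$ is not weakly compact, and $A$ is neither weakly compact nor compact. The main obstacle is really only ensuring the Arens-product manipulation $P_{a_{k+1}} \cdot g \to \phi \cdot g$ weakly in $A$ is valid under the ideal hypothesis; this is where the equivalence $A \lhd A^{**} \Leftrightarrow L_g$ weakly compact is actually being used, via the fact that $L_g^{**}$ is weak-$*$-weak-$*$ continuous and lands inside $A$, whose weak topology coincides with the restriction of the weak-$*$ topology of $A^{**}$.
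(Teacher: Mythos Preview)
Your argument is correct and is essentially the same as the paper's direct proof, just phrased through the bidual rather than through weak compactness of $L_g$. The paper assumes $L_g$ is weakly compact, applies it to the bai $(P_{a_{k+1}})$ of $\overline{A_{00}}$ to extract a weak limit $a\in\overline{A_{00}}$, and then checks $a=g$ by multiplying by each $e_m$; you instead take a weak-$*$ cluster point in $A^{**}$, use the ideal hypothesis to land $\phi\cdot g$ back in $A$, and identify it with $g$ via the characters $\chi_m$ and semisimplicity. Since $ae_m=\chi_m(a)e_m$, testing with $e_m$ and testing with $\chi_m$ are literally the same computation, and the two routes coincide once one unpacks the standard equivalence between ``$A$ is an ideal in $A^{**}$'' and ``every $L_a$ is weakly compact.''
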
   
\proof  See e.g.\ \cite[1.4.13]{Pal} or 
\cite[Lemma 5.1]{ABR} for the well known
equivalence between being weakly compact
and being an ideal in the bidual. 

The rest follows from results at the end of the Introduction,
but we give a direct proof.  Assume by way of contradiction that multiplication by $g$ on  $A$ (or, for that matter, on  $\overline{A_{00}}$) is  weakly compact.
Suppose that $(f_k)$ is a bai for $\overline{A_{00}}$.  
Since $\overline{A_{00}}$ is weakly closed,
there is a subsequence
$g f_{n_k} \to a \in \overline{A_{00}}$ weakly, say.  Thus $g f_{n_k} e_m \to a e_m$ in norm for each $m \in \Ndb$.
However $g f_{n_k} e_m \to g e_m$ in norm since $(f_k)$ is a bai.  Thus $(g - a) e_m = 0$ for every $m$, yielding the
contradiction $g=  a \in \overline{A_{00}}$.  
  \endproof

\medskip

The proof of Theorem \ref{ch} is now complete, except for Lemma \ref{nm22} and the
very last assertion, about obtaining $g \notin  \overline{A_{00}}^{{\rm SOT}}$.  To see the latter, 
we will change the Hilbert space $A$ acts on.  Suppose that $A$ generates a $C^*$-algebra $B$.
Then  $\overline{A_{00}}$ generates a proper $C^*$-subalgebra $B_0$ of $B$ (since if $B_0 = B$
then the cai of $\overline{A_{00}}$ would be a cai for $A$ by \cite[Lemma 2.1.7 (2)]{BLM}, and this is false).  Let $B \subset B(K)$ be the 
universal representation, so that $B^{**}$ may be represented
as a von Neumann algebra $M$ on $K$.  Then $M = \overline{B}^{w*} = \overline{B}^{{\rm SOT}}$
by von Neumann's double commutant theorem.   If $A \subset \overline{A_{00}}^{{\rm SOT}}$
then $$A
\subset \overline{B_0}^{{\rm SOT}} = \overline{B_0}^{w*},$$
so that  $B^{**} \cong \overline{B}^{w*} \subset \overline{B_0}^{w*}$.  This implies that $B_0^{\perp \perp} = B^{**}$,
and   we obtain the contradiction $B = B_0$.

\begin{remark}
 Probably a modification of our construction in Section  \ref{hilr} would produce a representation 
in which we would explicitly have $g \notin  \overline{A_{00}}^{{\rm SOT}}$.  We had a more complicated
construction for Theorem \ref{ch} 
in a  previous draft for which this  perhaps may have been true.  
\end{remark}

We recall that for a commutative semisimple Banach algebra
the following are equivalent: (i)\  $A$ is a modular annihilator algebra; 
(ii)\ the Gelfand spectrum of $A$ is discrete; (iii)\  no element of $A$ has a nonzero
limit point in its spectrum; and (iv)\ for 
every $a \in A$, multiplication on $A$ by $a$
is a Riesz operator (see \cite[Theorem 8.6.4 and Proposition
8.7.8]{Pal} and \cite[p.\ 400]{LN}).  Thus $A$ and $\overline{A_{00}}$ are
modular annihilator algebras.   It  follows that our algebra $A$ is
a commutative solution to a problem raised in \cite{BRII}: is
 every semisimple modular annihilator algebra with a cai, weakly compact?  
In \cite{BRIII} we found a
much simpler (but still deep) 
noncommutative counterexample to the latter question, an example with some interesting noncommutative features.  

\begin{theorem} \label{addpro}   The operator algebras $A$ and $\overline{A_{00}}$ constructed above have the following additional properties: \begin{itemize}
\item [{\rm (a)}]  Every maximal ideal in $\overline{A_{00}}$, and every maximal modular ideal in $A$,  has a bounded approximate identity.
\item [{\rm (b)}]  $A$ and $\overline{A_{00}}$ are regular natural Banach function algebra (in the sense of 
\cite[Section 4.1]{Dales}) on $\bN$ or on $\{ \frac{1}{2^n} : n \in \bN \}$. 
\item [{\rm (c)}]  $A$ is not Tauberian, nor is strongly regular or Ditkin, nor satisfies spectral synthesis (see \cite{Dales,LN})
for definitions).  On the other hand, $\overline{A_{00}}$ does have all these properties, indeed it is a strong Ditkin algebra. 
\item [{\rm (d)}]   $A$ is a semisimple modular annihilator algebra, while $\overline{A_{00}}$ is 
a dual algebra in the sense of Kaplansky (see e.g.\ \cite[Chapter 8]{Pal}).
\item [{\rm (e)}]  The closure of the socle of $A$ (or of $\overline{A_{00}}$)
is $\overline{A_{00}}$, and $A$ is not an annihilator algebra in the sense of 
\cite[Chapter 8]{Pal}.
\item [{\rm (f)}]   $A$ is not nc-discrete in the sense of \cite{ABS}: indeed the support projection of $\overline{A_{00}}$ 
in $A^{**}$ is open but not closed.
\item [{\rm (g)}]   $A$, and its multiplier algebra $M(A)$, may be identifed completely isometrically isomorphically with subalgebras of the 
multiplier algebra $M(\overline{A_{00}})$.
\end{itemize} 
\end{theorem}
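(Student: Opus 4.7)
The strategy is to exploit two structural facts repeatedly: the character space of both $A$ and $\overline{A_{00}}$ equals $\{\chi_n : n \in \bN\}$ with each $e_n$ a minimal idempotent satisfying $\chi_m(e_n) = \delta_{mn}$ (\refcor{3}), and $\overline{A_{00}}$ carries $(P_{a_k})$ as a cai in the $A$-norm (\reflem{noca}). Most of (a)--(g) reduce to elementary manipulations of these data together with essentiality.

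For (a), given a cai $(u_k)$ of $A$ (resp.\ of $\overline{A_{00}}$), I would verify that $(u_k - e_n u_k)$ is a bai for $\ker \chi_n$: the norms are bounded by $1 + \nm{e_n}$, the elements lie in $\ker \chi_n$ by direct computation with $\chi_n$, and for $a \in \ker \chi_n$ one has $a e_n = \chi_n(a) e_n = 0$, so $a(u_k - e_n u_k) = a u_k \to a$. For (b), naturality is immediate from \refcor{3}, and regularity holds because each point $1/2^n$ of the discrete spectrum is separated from any disjoint closed set by the idempotent $e_n \in \overline{A_{00}} \subset A$. For (c), $A$ fails to be Tauberian because its socle $\spn\{e_n : n \in \bN\}$ has closure $\overline{A_{00}}$, which does not contain $g$; spectral synthesis fails since $\overline{A_{00}}$ is a proper closed ideal not contained in any maximal modular ideal (no $\chi_n$ annihilates $\overline{A_{00}}$), and strong regularity and Ditkin each imply the Tauberian property. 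Conversely $\overline{A_{00}}$ has dense socle by definition, and the elements $P_{a_k} - e_n \in \overline{A_{00}} \cap \ker \chi_n$ (for $a_k > n$) provide the local cai witnessing strong Ditkin at each character.

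For (d), the modular annihilator property reduces, via the Palmer equivalences cited just before the theorem, to the discreteness of the Gelfand spectrum of each algebra. That $\overline{A_{00}}$ is a dual algebra follows by combining its representation inside the compacts $K(H)$ (each $\rho(e_n)$ is finite-rank) with its dense socle and cai. For (e), the socle's closure is clearly $\overline{A_{00}}$, and $A$ fails the annihilator property because $\overline{A_{00}}$ is essential: $a \overline{A_{00}} = 0$ gives $\chi_n(a) e_n = a e_n = 0$ for every $n$, whence semisimplicity forces $a = 0$. For (g), essentiality yields an injective contractive homomorphism $A \to M(\overline{A_{00}})$ by left multiplication; isometry follows from $\rho(P_{a_k}) \to I_H$ in SOT together with $\rho$ being isometric, giving $\nm{a}_A = \nm{\rho(a)} \le \liminf_k \nm{\rho(a) \rho(P_{a_k})} = \liminf_k \nm{a P_{a_k}}_A \le \nm{a}_{M(\overline{A_{00}})}$, and the reverse inequality is trivial; the same reasoning applied to multiplier actions extends this to $M(A) \hookrightarrow M(\overline{A_{00}})$.

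The main obstacle is (f). The support projection $p$ of $\overline{A_{00}}$ in $A^{**}$ is open because $\overline{A_{00}}$ has a cai, so $p$ is the weak-$*$ limit of $(P_{a_k})$. To show $p$ is not closed, I would argue by contradiction: if $1-p$ were open, then the open-projection/ideal-with-cai correspondence in the framework of \cite{ABS} produces a closed ideal $J \subset A$ with cai and support projection $1-p$. Orthogonality of $p$ and $1-p$ in $A^{**}$ gives $J \cdot \overline{A_{00}} = 0$; essentiality from (e) then forces $J = 0$, hence $p = 1$, i.e., $\overline{A_{00}} = A$, contradicting $g \notin \overline{A_{00}}$. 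The delicate point is invoking precisely the right version of this dictionary in the nonselfadjoint commutative setting of \cite{ABS}, and making sure that the support projection arising there is genuinely orthogonal to $p$ as an element of $A^{**}$.
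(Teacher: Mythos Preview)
Your proposal is essentially correct and, for parts (a)--(d) and (g), tracks the paper's own proof closely: the paper also uses $(x_n - x_n e_m)$ as a bai for $\ker\chi_m$, reduces regularity and the modular annihilator property to the identification of the character space, and for (g) uses the concrete representation $\rho$ together with basic multiplier-algebra facts (your explicit isometry computation via $\rho(P_{a_k}) \to I_H$ in SOT is a slight elaboration of what the paper leaves to ``basic facts'').

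There are two points where you diverge. In (e) you assert that the socle's closure is ``clearly'' $\overline{A_{00}}$, but this needs the observation that $A$ has no minimal idempotents besides the $e_n$; the paper supplies the one-line argument (a minimal idempotent $f \notin A_{00}$ would satisfy $fe_n \in \Cdb f \cap \Cdb e_n = (0)$ for all $n$, hence $f=0$ by semisimplicity). Your essentiality argument for ``not an annihilator algebra'' is fine and in fact more explicit than the paper.

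For (f) your route is genuinely different. The paper does not run the open-projection/ideal-with-cai dictionary abstractly; instead it invokes the proof of \cite[Corollary~2.13]{BRIII} and argues directly that $\tilde{\rho}(1-p)e_n \neq 0$ for some $n$ (otherwise the Gelfand transform of $\tilde{\rho}(1-p)a$ would vanish for every $a \in A$). Your contradiction argument is valid provided the bijection between open projections in $A^{**}$ and closed ideals of $A$ with cai holds in the approximately unital commutative setting---which it does in the \cite{BHN}/\cite{ABS} framework---so that $J=0$ forces $1-p=0$. The paper's approach avoids appealing to that correspondence and is slightly more concrete; yours is cleaner once the dictionary is granted.
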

 
\proof   We only prove some of these assertions, leaving the others as exercises.

(a)\ The maximal ideals are the annihilators of the $e_m$, which have as a bai  $(x_n - x_n e_m)$, where $(x_n)$ is a cai for $A$ or $A_0$. 

(b)\ These follow easily from the definitions, and the identification of the characters of these algebras.

(c)\  First, $A$ is not Tauberian
in the sense of e.g. \cite[Definition 4.7.9]{LN}), because $\overline{A_{00}} \neq A$.  This implies 
failure of spectral synthesis by e.g.\ \cite[p.\ 385]{LN}.  Similar arguments show the other assertions
for $A$.   The statements for $\overline{A_{00}}$ are easy, or follow
from \cite[p.\ 419]{Dales}.  

(d)\ We have already observed this 
for $A$.  For $\overline{A_{00}}$ this follows from \cite[Proposition 4.1.35]{Dales}, or from the observation
whose proof we omit
that for a natural Banach sequence algebra being `dual' is equivalent to spectral synthesis holding, or to having 
`approximate units' \cite[Definition 2.9.10]{Dales}.   

(e)\ The assertion for $\overline{A_{00}}$ is clear.  If
$f$ is a minimal idempotent in $A \setminus A_{00}$, then $f e_n = 0$ for all
$n \in \Ndb$ (since  $f e_n \in \Cdb f \cap \Cdb e = (0)$), and so $f = 0$.

(f)\ This is almost identical to the proof of \cite[Corollary 2.13]{BRIII}, except that we work with 
$e_n$ as opposed to the $e^n_{ii}$ there.  Note that $\tilde{\rho}(1-p) e_n \neq 0$ for some $n$
because otherwise the (faithful) Gelfand transform of $\tilde{\rho}(1-p) a$ would be zero for all
$a \in A$.  

(g)\  In the explicit nondegenerate representation 
$\rho$ of $A$  given in 
Section 5 and the start of Section 6, 
it is clear that $\rho(A) \rho(A_{00}) \subset \rho(\overline{A_{00}})$.  More generally, if $T \rho(A) 
\subset \rho(A)$ then $$T \rho(\overline{A_{00}}) = T \rho(\overline{A_{00}}) \rho(\overline{A_{00}})
\subset \rho(A) \rho(\overline{A_{00}}) \subset \rho(\overline{A_{00}})  .$$
A similar assertion holds if $\rho(A) T 
\subset \rho(A)$.  The results now follow
from  basic facts about multiplier algebras \cite[Section 2.5]{BLM}.
\endproof

\begin{remark}
  1)\ 
By  \cite[Theorem 5.10]{ABR}, 
the dual space of $\overline{A_{00}}$ has no proper closed subspace
that norms $\overline{A_{00}}$.   On the other hand,  
one can show that the closure of the span of the characters of $A$ 
in $A^*$,  is a proper norming subspace for $A$.

2)\  For our algebra, the interested reader can easily identify the algebras $M_0(\overline{A_{00}})$ and $M_{00}(\overline{A_{00}})$ studied 
in \cite{LN}.

\smallskip

3)\   It is easy to see that $g^n$ generates $A$ for every $n \in \Ndb$. 
Also, $A/\overline{A_{00}}$ is an interesting commutative radical operator algebra with cai.  
\end{remark}

Finally we give the illustration  promised after Theorem \ref{ch}, of how our examples can be useful in settling open questions in the immediate area.  The unitization $A^1$ of our algebra $A$ 
 solves an old question of Joel Feinstein  (see e.g.\ \cite{JF1,JF2}), and another similar question of Dales. 
They asked (in language explained for example
 in  \cite[Section 4.1]{Dales}): If $A$ is a regular unital Banach function algebra on its
 compact character space $X$, and if $E$ is a closed subset
of $X$ such that the the ideal $M_E$ of functions
 in $A$ vanishing on $E$ has a bai (for regular uniform algebras
 this is equivalent to $E$ being a $p$-set or generalized
 peak set),  then is $E$ a set of
 synthesis?  That is, is the ideal $J_E$ of functions in $A$ with a compact support which is  disjoint from $E$,
dense in $M_E$?     Feinstein's question was the case that
 $E$ is a singleton $\{x \}$; in this case it is
 equivalent to ask if
 $A$ is `strongly regular' at $x$.  (If $A$ is a uniform
 algebra then this is related to an even older important open problem, which would have
 remarkable consequences if true.)   Feinstein also asked if $M_x$ has a bai for every $x \in X$ then is $A$  `strongly 
regular' (that is every point in $X$  a set of synthesis)?
 
 As we have said in Theorem \ref{addpro}, our algebra $A$ is a separable
regular Banach function algebra, and by facts in \cite[Section 4.1]{Dales}
 we have that the unitization  $A^1$ is also a regular unital Banach function algebra on the one-point compactification
 $\{ 0 \} \cup \{ \frac{1}{2^n} : n \in \Ndb \}$.   It is clear that if $E$ is the single adjoined point then $M_E = A$ has a
 cai, but $J_E = A_{00}$
 is not dense in $M_0 = A$, answering the questions in the last paragraph
 in the negative.

\section{A lower estimate on $\nm{\gam_1\up n}_2\up n$}

We now return to the task of proving Lemma \ref{nm22}.

\blem\l le!
Given growth conditions on our underlying sequences, the following is true: for all $n\in\bN$, we have 
$(\nmtn{\gam_1\up n})^2\ge \frac 12\cdot\prodd j1n\frac{(j+1)^2}{2j+1}$\ \ . 
\elem
\proof Consider index functions $\bi\in\cI\up n$ such that $\bi(s)=0$ unless $s=s_j=\frac j{j+1}2^{a_j^2}\gam_{a_j}\up n$ for some $\range j1n$; and for each such $s_j$, we have 
$\bi(s_j)<{a}_j$. For such an $\bi$, the product $\bs^\bi\gam_1\up n$ is given by the formula  
$$
\bs^\bi\gam_1\up n=\prodd j1n(\frac j{j+1}2^{a_j^2})^{i_j}\gam_{1+\summ j1na_ji_j}\up n,
$$
where $i_j=\bi(s_j)$. The value of $\phi_n(\gam_{1+\summ j1na_ji_j}\up n)$ is given by \refeq{phin}, and its value is
$$\prodd j1n\linebreak 2^{-a_j^2i_j} (1-i_j/{a}_j).$$ So the sum, for these $\bi\in\cI\up n$, of $|\phi_n(\bs^\bi\gam_1\up n)|^2$, is precisely
\begin{equation} \label{nnr} \sum_{{i_j=0, ..., {a}_j-1}\atop{\range j1n}} (\prodd j1n(\frac j{j+1})^{i_j}(1-i_j/{a}_j))^2
= \prodd j1n \sum_{i=0, ..., {a}_j-1} \, (\frac j{j+1})^{2i} \, (1-i/{a}_j)^2.
\end{equation}
We may assume that ${a}_j$ is very large indeed compared to $j$. We claim that growth conditions will ensure that  this quantity 
will be, for each $n$, at least half of the sum
$$
\prodd j1n \sum_{{i \in \bN_0}\atop{\range j1n}} (\frac j{j+1})^{2i}
=  \prodd j1n\frac 1{1-(j/j+1)^2}=\prodd j1n\frac{(j+1)^2}{2j+1} . 
$$
To see this, let us choose positive $h_k<1$ such that $\prod_{k= 1}^m  h_k > 1/2$ for all $m \in \Ndb$.  Our product
\refeq{nnr}  will be at least half of the product of $(j+1)^2/(2j+1)$, provided
that for every $j=1, \cdots, n$, we have
$$ \sum_{i=0}^{a_j-1} (\frac j {j+1})^{2i} 
(1- \frac i {a_j})^2\ge h_j \cdot \sum_{i=0}^\infty (\frac j {j+1})^{2i}.
$$
The sum 
$\sum_i (j/j+1)^{2i}$
converges, and we may think of it as a integral with respect to 
counting measure.  If  $f_a(i)=1-a/i$ for $i<a$, and is zero for $i\ge a$, then $(f_a)$ is  
uniformly bounded and converges to  1 pointwise. Therefore, by the Lebesgue
dominated convergence theorem, $$\lim_{a \to \infty} \, \sum_i (j/j+1)^{2i} f_a(i)^2 = \sum_i (j/j+1)^{2i}.$$ If each $a_j$ is chosen large enough we therefore have for $j=1, \cdots, n$ that
 $$\sum_i (j/j+1)^{2i} f_{a_j}(i)^2 =\sum_i (j/j+1)^{2i}  (1- \frac i {a_j})^2
\ge (1-h_j) \sum_i (j/j+1)^{2i} .$$
This proves the claim.
Finally, $(\nmtn{\gam_1\up n})^2\ge \frac 12\cdot \prodd j1n\frac{(j+1)^2}{2j+1}$, as desired.  \endproof

\section{Strategy for an upper estimate for $\nmtn{\gam_0\up n}$}

In the remainder of our paper we strive for an upper estimate for $\nmtn{\gam_0\up n}$. Now
\begin{equation} \label{g01}
  (\nmtn{\gam_0\up n})^2=(\nmtn{\gam_1\up n})^2+\sum_{\bi\in \cI_0\up n}|\phi_n(\bs^\bi)|^2,
\end{equation}
where $\cI_0\up n=\{\bi\in \cI\up n:\bi(\gam_1\up n)=0\}$.
Let us write $$\cI_0\up n=\cI_1\up n\cup\cI_2\up n\cup\cI_3\up n,$$
with $$\cI_1\up n=\{\bi\in\cI\up n:\bi(\gam_1\up n)=\bi(a_i^{-1}e_i)=0 \; \text{for all} \; i, \; \text{and} \; 
|\bi|=\sum_s \bi(s)<\sqrt{{a}_{n+1}}\},$$ and 
$$\cI_2\up n=\{\bi\in\cI\up n:\bi(\gam_1\up n)=\bi(a_i^{-1}e_i)=0 \; \text{for all} \; i , \; \text{but} \; |\bi|\ge\sqrt{{a}_{n+1}}\},$$
and 
$$\cI_3\up n=\{\bi\in\cI\up n:\bi(\gam_1\up n)=0, \; \text{but} \; \bi(a_i^{-1}e_i)>0 \; \text{for some} \; i\}.$$

The main contribution towards the sum \refeq{g01}\ that we must investigate, is from the sum over $\bi\in \cI_1\up n$.  We will estimate this in the lengthy Section \ref{sixse}.  In the much easier
Sections \ref{7se} and \ref{8se} we estimate the contribution from $\cI_2\up n$
and $\cI_3\up n$ respectively, and in the final Section \ref{9se} we summarize 
why this proves our main result. 
 
\section{Bound on $\sum_{\bi\in \cI_1\up n}|\phi_n(\bs^\bi)|^2$} \label{sixse} 
Let $\bi\in\cI_1\up n$. Write $$E(\bi)= \{j\in[1,n]:\bi(\frac j{j+1}2^{a_j^2}\gam_{a_j}\up n)>{a}_j\}.$$
Let $\eta(\bi)$ be the set whose elements are of form 
$$\sum_{j\in E(\bi)}(\lam_ja_j)+\summ j1n(\mu_j+\nu_ja_j),$$
for integers $$
\lam_j=\bi(\frac j{j+1}2^{a_j^2}\gam_{a_j}\up n), \; \; \; 0\le\nu_j\le\mu_j=\bi(j(2^{a_j^2}\gam_{1+a_j}\up n-\gam_1\up n)),\nu_j\in\bN_0.$$

\blem\l 6.1!
Let $\bi\in\cI_1\up n$. If $\phi_n(\bs^\bi)\ne 0$, then the set $\eta(\bi)$ defined above must contain a positive element of the set $1+\Lam_n-\Lam_n$. 
\elem
\proof For when $0\le k<{a}_{n+1}-{a}_n$, we have $\phi_n(\gam_k\up n)=0$ unless 
$k\in 1+\Lam_n$ by \refeq{phin}\ (of course things are more complicated for larger $k$). Writing $\lam_j=\bi(\frac j{j+1}2^{a_j^2}\gam_{a_j}\up n)$, the product $\prod_{j \notin E(\bi)}(\frac j{j+1}2^{a_j^2}\gam_{a_j}\up n))^{\lam_j}$ is a multiple
$\lam\cdot\gam_k\up n$, where $k=\sum_{j\notin E(\bi)}\lam_ja_j\in\Lam_n$. The full product $\bs^\bi$ is equal to 
$$\bs^\bi = \lam\cdot\gam_k\up n\cdot\prod_{j\in E(\bi)}(\frac j{j+1}2^{a_j^2}\gam_{a_j}\up n)^{\lam_j}\cdot\prodd j1n (j(2^{a_j^2}\gam_{1+a_j}\up n-\gam_1\up n))^{\mu_j},$$  where again,
$\mu_j=\bi(j(2^{a_j^2}\gam_{1+a_j}\up n-\gam_1\up n))$. This is a linear combination of vectors $\gam_m\up n$ for $m\in k+\eta(\bi)$.  Furthermore, since $|\bi|<\sqrt{{a}_{n+1}}$, given a growth condition asserting  ${a}_{n+1}$ large compared to ${a}_n$, there is never any vector $\gam_m\up n$ involved when $m\ge{a}_{n+1}-{a}_n$. So the set $k+\eta(\bi)$ must meet the set $1+\Lam_n$, hence the result.   
\endproof

\medskip

Having got \reflem{6.1}, we want to separate out the cases when $1\in\eta(\bi)$, from the cases when $\eta(\bi)$ only contains larger elements of the set $$1+\Lam_n-\Lam_n=\{1+\summ i1n t_ia_i:-{a}_i\le t_i\le{a}_i\}.$$

Let us write $m_0(\bi)=\min(\eta(\bi)\cap(1+\Lam_n-\Lam_n))$,
and let us begin with the more challenging case when $m_0=m_0(\bi)>1$.
We can then write $m_0=1+\summ i1r a_it_i$ with $t_r>0$ and $-{a}_j\le t_j\le {a}_j$ for all $j$.
In particular, $m_0\le 1+\xi_r$.
With $\lam_j$ and $\mu_j$ as above, we can write
$$m_0=\sum_{j\in E(\bi)}a_j\lam_j+\summ j1n(\mu_j+a_j\nu_j)$$ with $0\le\nu_j\le\mu_j$.
But then, we must have $\nu_j=0$ for $j>r$ otherwise the value of $m_0$ will be too big. Indeed $\nu_r=0$ too, or we can get a smaller element of $\eta(\bi)\cap(1+\Lam_n-\Lam_n)$ by considering $m_0-a_r$. Again, we cannot have $j\in E(\bi)$ for any $j\ge r$ otherwise the value of $m_0$ is again too big (these are $j$ such that $\lam_j>{a}_j$). So, $E(\bi)\subset [1,r)$ and
\begin{equation} \label{m0}  m_0=\sum_{j\in E(\bi)\subset[1,r)}\lam_ja_j+\summ j1{r-1}(\mu_j+\nu_ja_j)+\summ jrn\mu_j.
\end{equation} Let us consider the vector
\begin{equation} 
\label{x}  x= (\gam_1\up n)^{\summ jrn \, \mu_j}\cdot \prodd j1{r-1}(\frac j{j+1}2^{a_j^2}\gam_{a_j}\up n)^{\lam_j}\cdot (j(2^{a_j^2}\gam_{1+a_j}\up n-\gam_1\up n))^{\mu_j} .
 \end{equation}
Given a growth condition, we can certainly assume that $m_0\in[(t_r-\frac 14)a_r,(t_r+\frac 14)a_r]$.
For which $i\in[0,{a}_{n+1}-{a}_n)$ does $x$ have a nonzero coefficient for  $\gam_i\up n$? 
(We may refer to the set of such $i$ as ``the $\gam$-support of $x$'').
Where does the $\gam$-support of $x$ lie? 
From \refeq{x}, the generic element of that support is a sum 
$$m=\summ r1n (\mu_j+\nu_j'a_j)+\summ j1{r-1} \lam_ja_j, \qquad  0\le\nu_j'\le\mu_j
.$$ 
The difference between $m_0$ (as given by \refeq{m0}) and this expression
is a sum of form  $$\summ j1{r-1}(\nu_j'-\nu_j)a_j , \qquad 0\le\nu_j'\le\mu_j,$$
 plus
the sum $\sum_{j\in[1,r)\setminus E(\bi)} \lam_ja_j$. The second sum cannot
be negative, but in the worst case might be as large as  $\xi_{r-1}$.
Write $m_0'=\summ r1n (\mu_j+\nu_j'a_j)+\sum_{j\in E(\bi)} \lam_ja_j$.
The ratio $m_0'/m_0$ is in $[1/(1+a_{r-1}), 1+a_{r-1}]$, and we have $m_0'\le
m\le m_0'+\xi_{r-1}$. So the  $\gam$-support of $x$ is contained in
$$[m_0/(1+a_{r-1}),m_0(1+a_{r-1})+\xi_{r-1})\subset(a_r/2a_{r-1},2a_r^2a_{r-1}),$$
given a growth condition.

Let us write 
\begin{equation} \label{tau} 
\tau=\summ i1{r-1}\lam_ia_i+\summ i1n\mu_i ,  \end{equation}
noting that $\tau$ is the minimum of the $\gam$-support of $x$.

Given that the vector $x$ is $\gam$-supported well to the right of zero, let us introduce a Banach algebra norm $\Nm_\gam$ to make use of this. The $l_1$ version of this is
$$\nm{\summ i0{{a}_{n+1}-{a}_n-1}y_i\gam_i\up n}_\gam=\summ i0{{a}_{n+1}-{a}_n-1}|y_i|.$$
We have
$$\nm x_\gam\le\prodd j1{r-1}(\frac j{j+1}2^{a_j^2})^{\lam_j}(j(2^{a_j^2}+1))^{\mu_j}. $$
For a reasonable bound on this, let us write $\summ j1{r-1}\lam_j+\mu_j=L$; so the sum of all the indices $\lam_j,\mu_j$ involved in the last product 
is $L$. Since the largest possible power is $(r-1)(2^{{a}_{r-1}^2}+1)$, and 
$\Vert \cdot \Vert_{\gamma}$ is a Banach algebra norm, we get
\begin{equation} \label{nmxg}
  \nm{x}_\gam\le ((r-1)(2^{{a}_{r-1}^2}+1))^{L}.
\end{equation}
The vector $\bs^\bi$ is equal to
$$
\prodd j1{n}(\frac j{j+1}2^{a_j^2}\gam_{a_j}\up n)^{\lam_j}(j(2^{a_j^2}\gam_{1+a_j}\up n-\gam_1\up n))^{\mu_j}
=
x\cdot\prodd jrn(\frac j{j+1}2^{a_j^2}\gam_{a_j}\up n)^{\lam_j}(j(2^{a_j^2}\gam_{a_j}\up n-\gam_0\up n))^{\mu_j}
$$
 \begin{equation} \label{sixb}  
=x'\cdot\prodd j{r+1}n(\frac j{j+1}2^{a_j^2}\gam_{a_j}\up n)^{\lam_j}(j(2^{a_j^2}\gam_{a_j}\up n-\gam_0\up n))^{\mu_j},
\end{equation}
where 
\begin{equation} \label{x'}
  x'=x\cdot (\frac r{r+1}2^{a_r^2}\gam_{a_r}\up n)^{\lam_r}(r(2^{a_r^2}\gam_{a_r}\up n-\gam_0\up n))^{\mu_r}.
\end{equation}

Now $\summ j1n\mu_j\le m_0\le 1+\xi_r$ from \refeq{m0} and the definitions of $r$ and $\xi_r$.  So for each $\range j{r+1}n$ we have $\mu_j\le 1+\xi_r$ and $\lam_j+\mu_j\le a_j+1+\xi_r<2a_j$, since $j \notin E(\bi)$. When $j=r$ we have $\lam_j\le a_j$, again  because 
$r\notin E(\bi)$; but we must be content with the estimate $\mu_r\le 1+\xi_r$ above.  So $\lam_r+\mu_r\le 1+a_r+\xi_r\le 2a_r^2$, given a growth condition.

\blem\l i1e!
Given growth conditions, the following is true. For $r$ as above, and any nonnegative integer $\lam\le 2{a}_r^2$, and any 
$$x\in\lin\{\gam_j\up n:\lam a_r+a_r/2a_{r-1}<j\le (\lam+1) a_r+a_r/2a_{r-1}\},$$  we have 
\begin{equation} \label{20}
  |\phi_n(x)|\le 2^{-(\lam+1)a_r^2}{\nm x}_\gam .
\end{equation}
Furthermore, if $\lam> 1+{a}_r$, then $\phi_n(x)=0$.
\elem

\proof Equation \refeq{20}\ is equivalent to $$|\phi_n(\gam_{i}\up n)|\le 2^{-(1+\lam)a_r^2}, \qquad i\in(\lam a_r +a_r/2a_{r-1},(\lam+1) a_r +a_r/2a_{r-1}].$$ By \refeq{phin}, the left hand side is zero unless $i\in 1+\Lam_n$. But, given a growth condition, we can assume 
$a_r/2a_{r-1}>\xi_{r-1}=\max\Lam_{r-1}$, so the element of $1+\Lam_n$ involved must be at least 
$1+(\lam+1)a_r$.  Equation \refeq{phin}\ then gives 
$|\phi_n(\gam_{i} \up n)|\le 2^{-(1+\lam)a_r^2}$
as required. If $\lam>1+{a}_r$ we have $i>1+\xi_r$, so the least element of $1+\Lam_n$ available would be $1+a_{r+1}$. But given a growth condition, we can certainly assume that the absolute upper bound
$i\le (2{a}_r^2+1)a_r + a_r/2a_{r-1}$ is less than $a_{r+1}$, so for $\lam>1+{a}_r$ we have 
$\phi_n(x)=0$.\endproof

\medskip

We now use the lemma to estimate $|\phi_n(x')|$, where $x'$ is as in \refeq{x'}.  Define $t$ to be the nonnegative integer with 
\begin{equation} \label{deft}
\tau\in[ta_r,(t+1)a_r),
\end{equation} 
 where $\tau$ is the minimum of the support of $x$ as in \refeq{tau}\ and \refeq{x}. We will have $0\le t\le a_r$ because from \refeq{m0}, $$\tau\le m_0+\sum_{i\in [1,r)\setminus E(\bi)}\lam_ra_r\le m_0+\xi_{r-1},$$ and then $$m_0\le 1+\xi_r=1+\xi_{r-1}+a_r^2,$$ so $\tau \le 1+2\xi_{r-1}+a_r^2<(a_r+1)a_r$, given a growth condition. 

The vector $x$ given by \refeq{x}\ is $\gam$-supported on $(\max(a_r/2a_{r-1},ta_r),2a_r^2a_{r-1})$, so by applying \refeq{20}\ for various  $\lam$ and summing the results, we find that
\begin{equation} \label{22}  |\phi_n(x)|\le 2^{-a_r^2\max(1,t)}\nm x_\gam\le 2^{-a_r^2\max(1,t)}
\cdot ((r-1)(2^{{a}_{r-1}^2}+1))^{L},
\end{equation} where $L=\summ i1{r-1}\lam_i+\mu_i\le (t+1)a_r$,
by \refeq{nmxg}.
 For all $\lam\le{a}_r^2$ it is easy to argue that 
\begin{equation} \label{23}
  |\phi_n(\gam_{\lam a_r}\cdot x)|\le  2^{-a_r^2(\lam+\max(1,t))}\cdot ((r-1)(2^{{a}_{r-1}^2}+1))^{L}.
\end{equation}
Given a growth condition, we
may replace the bounds on the right of \refeq{22}\ and \refeq{23}\ by $2^{-a_r^2\max(1/2,t-1/2)}$ and 
$2^{-a_r^2(\lam+\max(1/2,t-1/2))}$ respectively. Accordingly the vector 
$x'=
x\cdot (\frac r{r+1}2^{{a}_r^2}\gam_{a_r}\up n)^{\lam_r}(r(2^{a_r^2}\gam_{a_r}\up n-1))^{\mu_r}$ from \refeq{x'} 
satisfies
$$|\phi_n(x')|\le 2^{-a_r^2\max(1/2,t-1/2)}\cdot (\frac r{r+1})^{\lam_r}(2r)^{\mu_r}.
$$
Crudely, we may estimate $\mu_r\le \tau\le a_r(t+1)$ (from \refeq{deft} and the
definition of $\tau$), and then we have 
$$2^{-a_r^2\max(1/2,t-1/2)}\cdot (2r)^{\mu_r}\le 2^M,$$ where $$M=(1+\log_2 r)a_r(t+1)-a_r^2\max(1/2,t-1/2).$$ Now a growth condition on the sequence $(a_r)$
will ensure  that $(1+\log_2 r)a_r<a_r^2/12$ for every $r$, and since $(t+1)\le 4\max(1/2,t-1/2)$ for any $t\in\bN_0$, we will then have $$(1+\log_2 r)a_r(t+1)-a_r^2\max(1/2,t-1/2)\le -\frac 23a_r^2\max(1/2,t-1/2).$$  But this equals $$-a_r^2\max(1/3,(2t-1)/3)\le -a_r^2\max(1/3,t/3).$$ So we have 
\begin{equation} \label{24}  |\phi_n(x')|\le 2^{-a_r^2\max(1/3,t/3)}\cdot (\frac r{r+1})^{\lam_r}.
\end{equation}

\blem\l i1eb!
If $w\in\lin\{\gam_i\up n: 0\le i<a_{r+1}\}$,  and $\rho_j\in\bN_0$, for $0\le\rho_j\le 2a_j$ for $j=r+1,...,n$, then writing $y=w\cdot\prodd j{r+1}n(\gam_{a_j}\up n)^{\rho_j}$, we also have 
\begin{equation} \label{21}  \phi_n(y)=
\phi_n(w)\cdot\prodd j{r+1}n 2^{-\rho_ja_j^2} (1-\rho_j/{a}_j)_+,
\end{equation}
where as usual, $t_+$ denotes the maximum $\max(t,0)$.
\elem
\proof
Since $w\in\lin\{\gam_i\up n:0<i<{a}_{r+1}\}$,  we refer to \refeq{phin}\ to find 
$\phi_n(\gam_{i+\summ j{r+1}n\rho_ja_j}\up n)$ for such $i$ (and $\rho_j\le 2{a}_j$), and it is 
$\phi_n(\gam_i\up n)\cdot\prodd j{r+1}n2^{-a_j^2\rho_j}(1-\rho_j/{a}_j)_+$, given the usual growth condition which ensures that, for $\rho_j\le 2a_j$, $i+\summ j{r+1}n\rho_ja_j$ is not in $1+\Lam_n$ unless $i\in\Lam_n$ and all the $\rho_j\le a_j$. Equation \refeq{21} follows.\endproof

\medskip

Equation \refeq{21}\ can also be written as $$\phi_n(w\cdot \prodd  j{r+1}n(2^{a_j^2}\gam_{a_j}\up n)^{\rho_j})=\phi_n(w)\cdot\prodd j{r+1}n (1-\rho_j/{a}_j)_+.$$ So if $\lam_j,\mu_j\le a_j$, 
and we write $$y'=w\cdot \prodd  j{r+1}n(2^{a_j^2}\gam_{a_j}\up n-\gam_0\up n)^{\mu_j}(2^{a_j^2}\gam_{a_j}\up n)^{\lam_j},$$ we will therefore have 
$$
\phi_n(y')=\sum_{{\alp_j=0,\ldots,\mu_j}\atop{\range j{r+1}n}} \phi_n \biggl( w \cdot\prodd j{r+1}n  \biggl( (2^{a_j^2}\gam_{a_j}\up n)^{\lam_j+\alp_j}(-1)^{\mu_j-\alp_j}{{\mu_j}\choose {\alp_j}} 
\biggr) \biggr) $$
\begin{equation} \label{xx1}
  =\sum_{{\alp_j=0,\ldots,\mu_j}\atop{\range j{r+1}n}}\phi_n(w)\cdot\prodd j{r+1}n
(-1)^{\mu_j-\alp_j}{{\mu_j}\choose {\alp_j}} (1-\frac{\alp_j+\lam_j}{a_j})_+.
\end{equation}
We next note that, given a growth condition, the fact that $x$ (as in \refeq{x}) is $\gam$-supported on $[0, 2a_{r}^2a_{r-1})$ tells us that $x'$ (as in \refeq{x'}) is supported on $[0,a_{r+1})$. 
We also recall that $m_0\le 1+\xi_r$, where $m_0$ is  as in \refeq{m0}; so 
$$\mu_j\le m_0\le 1+\xi_r<a_j, \qquad j > r,$$ given a growth condition. 
Let us therefore  apply \refeq{xx1}\ to obtain $\phi_n(\bs^\bi)$ as a multiple of $\phi_n(x')$, using equation \refeq{sixb}:
$$
\phi_n(\bs^\bi)=\phi_n\biggl(x'\cdot\prodd j{r+1}n(j(2^{a_j^2}\gam_{a_j}\up n-\gam_0\up n))^{\mu_j}(\frac j{j+1}2^{a_j^2}\gam_{a_j}\up n)^{\lam_j}\biggr) .
$$ 
But this equals 
$$\sum_{{\alp_j=0,\ldots,\mu_j}\atop{\range j{r+1}n}}\phi_n(x')\cdot\prodd j{r+1}n
(-1)^{\mu_j-\alp_j}{{\mu_j}\choose {\alp_j}} (1-\frac{\alp_j+\lam_j}{a_j})_+,$$ which in turn equals
$$\phi_n(x')\cdot \prodd j{r+1}nj^{\mu_j}(\frac j{j+1})^{\lam_j}\eta_j,\ \textrm{ where }
\eta_j=\summ \alp 0{\mu_j}(-1)^{\mu_j-\alp}{{\mu_j}\choose {\alp}} (1-\frac{\alp+\lam_j}{a_j})_+.$$
 Putting our estimate \refeq{24}\ into this equation, we have 
\begin{equation} \label{etab} 
|\phi_n(\bs^\bi)|\le 2^{-a_r^2\max(1/3,t/3)}\cdot (\frac r{r+1})^{\lam_r}\cdot \prodd j{r+1}nj^{\mu_j}(\frac j{j+1})^{\lam_j}|\eta_j|.
\end{equation}
We can estimate $|\eta_j|$ as follows. If $\lam_j+\mu_j\le a_j$, there is no need to estimate:
 we have 
$\eta_j=\summ \alp 0{\mu_j}(-1)^{\mu_j-\alp}{{\mu_j}\choose {\alp}} (1-\frac{\alp+\lam_j}{a_j})$.
This is $1-\lam_j/a_j$ if $\mu_j=0$, and is $-1/a_j$ if $\mu_j=1$.
It is zero if $\mu_j>1$, by a binomial series argument, or
because second and higher differences of the sequence $(1-(\alp+\lam_j)/a_j)_{\alp=0}^\infty$ are zero. 
If $\lam_j+\mu_j> a_j$, we estimate as follows: we must have $\lam_j\ge a_j-\xi_r$ because 
$\mu_j\le 1+\xi_r$; so $ (1-(\alp+\lam_j)/a_j)_+\le \xi_r/a_j$ for all $\alp\ge 0$; so 
$|\eta_j|\le 2^{\mu_j}\xi_r/a_j\le 2^{1+\xi_r}\xi_r/a_j\le a_j^{-2/3}$, given a growth condition.
We are now in a position to prove:

\blem\l 6.3!
Given growth conditions, one has 
$$\sum_{{\bi\in\cI_1\up n}\atop{m_0(\bi)>1}} |\phi_n(\bs^\bi)|^2\le \prodd j1n\frac {(j+1)^2}{2j+1},
\qquad n\in\bN .$$
\elem

\proof The full sum over $\bi\in\cI_1\up n, m_0(\bi)>1$ is a sum, from $r=1$ to $n$, of contributions
involving $\bi$ with $m_0(\bi)=1+\summ j1r t_ja_j$, and $t_r>0$. For a fixed $r$, we further consider contributions for fixed $\lam_j$ ($\range j1n$), $\mu_j$ ($\range j1r$) and fixed $s=\summ j{r+1}n\mu_j$ (where as usual, $\lam_j=\bi(\frac j{j+1}2^{a_j^2}\gam_{a_j}\up n)$ and $\mu_j=\bi(j(2^{a_j^2\gam_{a_j}\up n-\gam_0\up n}))$. 
So let us write $\cI_1\up {n,r,\lam_1...\lam_n,\mu_1....\mu_r,s}$ for the set of 
$\bi\in\cI_1\up n$ with $m_0(\bi)=1+\summ j1r t_ja_j$, and $t_r>0$, and the given values 
 $\lam_j$ ($\range j1n$), $\mu_j$ ($\range j1r$) and $s=\summ j{r+1}n\mu_j$.

Once these are all fixed, we know the vectors $x$ and $x'$ as in \refeq{x}\ and \refeq{x'}, and also the constants $\tau$, $t$ and $L$ as in \refeq{tau}\ and \refeq{22}. 
For each $j > r$, the values $|\eta_j|$ are determined by $\lam_j$ and $\mu_j$, as described below \refeq{etab}.
Given $\lam_j$, the product $j^{\mu_j}|\eta_j|$ can take the value $1-\lam_j/a_j$ once (when $\mu_j=0$), and the value $j/a_j$ once (when $\mu_j=1$), and values up to $j^{\mu_j}a_j^{-2/3}$ for any $\range {\mu_j}1{1+\xi_r}$. The sum of the squares of all such values is at most $$1+(j/a_j)^2+  (1+\xi_r)j^{2+2\xi_r}a_j^{-4/3}\le 1+a_j^{-1}$$ for all $j>r$, given a growth condition. 
So the sum of the products 
$$\prodd j{r+1}nj^{2 \mu_j}(\frac j{j+1})^{2\lam_j}|\eta_j|^2,$$ for various $\mu_j$ ($j=r+1,...,n$) with $\summ j{r+1}n\mu_j=s$, is at most 
$$\prodd j{r+1}n(\frac j{j+1})^{2\lam_j}(1+a_j^{-1}).$$
 Writing $\cI_1\up\cdot$ for $\cI_1\up {n,r,\lam_1...\lam_n,\mu_1....\mu_r,s}$, we then get 
$$  \sum_{\bi\in\cI_1 \up {\cdot}}|\phi_n(\bs^\bi)|^2
\le 2^{-a_r^2\max(2/3,2t/3)}\cdot (\frac r{r+1})^{2\lam_r}\cdot\prodd j{r+1}n(\frac j{j+1})^{2\lam_j}(1+a_j^{-1}).$$
from \refeq{etab}. 
We can sum this over all  possible $\lam_j$ ($\range jrn$); writing 
$\overline{\cI}_1\up\cdot =\overline{\cI}_1\up{n,r,\lam_1...\lam_{r-1}, \mu_1....\mu_r,s}$ for the union of all sets 
$\cI_1\up {n,r,\lam_1...\lam_n,\mu_1....\mu_r,s}$ as $\lam_j$ varies for $\range j{r}n$, we have 
$$\sum_{\bi\in\overline{\cI}\up {\cdot}}|\phi_n(\bs^\bi)|^2
\le 2^{-a_r^2\max(2/3,2t/3)}\cdot \frac {(r+1)^2}{2r+1}\cdot\prodd j{r+1}n \frac{(j+1)^2}{2j+1}(1+a_j^{-1}).
$$

The number of ways we can choose $\lam_1...\lam_{r-1}, \mu_1....\mu_r,s$ in order to get a nonempty set $\overline{\cI}_1\up{n,r,\lam_1...\lam_{r-1}, \mu_1....\mu_r,s}$ associated with the given value $t$, is less than the number of ways we can pick $2r$ nonnegative integers adding up to an answer $\tau\in[ta_r,(t+1)a_r)$ as in \refeq{tau}. Very crudely, this number is no bigger than $((t+1)a_r)^{2r}$. 
So writing $\overline  {\cI}_1\up{n,r,t}$ for the union of all sets $\overline{\cI}_1\up{n,r,\lam_1...\lam_{r-1}, \mu_1....\mu_r,s}$ such that \refeq{tau}\ holds, we have 
$$
\sum_{\bi\in\overline{\cI}\up {n,r,t}}|\phi_n(\bs^\bi)|^2
\le ((t+1)a_r)^{2a_r}2^{-a_r^2\max(2/3,2t/3)}\cdot \frac {(r+1)^2}{2r+1}\cdot\prodd j{r+1}n \frac{(j+1)^2}{2j+1}(1+a_j^{-1}).
$$
This is dominated by $$2^{-a_r^2\max(1/3,t/3)}\cdot \prodd j{r+1}n \frac{(j+1)^2}{2j+1}(1+a_j^{-1}),$$
given another growth condition. We can of course assume that $\prodd j1\infty (1+a_j^{-1})\le 2$. 
So $$\sum_{\bi\in\overline{\cI}\up {n,r,t}}|\phi_n(\bs^\bi)|^2\le  2^{1-a_r^2\max(1/3,t/3)}\cdot  \prodd j{r+1}n \frac{(j+1)^2}{2j+1}.$$
 Summing over all $t\in\bN_0$ and $r\in[1,n]$ we get 
$$
\sum_{{\bi\in\cI\up n_1}\atop{m_0(\bi)>1}}|\phi_n(\bs^\bi)|^2\le \summ r1n\summ t0\infty   2^{1-a_r^2\max(1/3,t/3)}\cdot   \prodd j{1}n \frac{(j+1)^2}{2j+1}\le \prodd j{1}n \frac{(j+1)^2}{2j+1},
$$ 
given another growth condition. Thus the lemma is proved.
\endproof

\medskip

We can now finish this section by polishing off the case when $\bi\in\cI_1\up n$ but $m_0(\bi)=1$. In that case, since
$\sum_{j\in E(\bi)} \lam_ja_j+\summ j1n\mu_j\le m_0$, we must have $E(\bi)=\emptyset$ and at most one $\mu_j=1$. In fact we must have exactly one $\mu_j=1$, since $0\notin 1+\Lam_n$. So 
$\bs^\bi= \prodd j1n(\frac j{j+1}2^{a_j^2}\gam_{a_j}\up n)^{\lam_j} r(2^{a_r^2}\gam_{1+a_r}\up n-\gam_1\up n)$ for some $r \leq n$, and so, when $\lam_r<{a}_r$, the reader can check that \refeq{phin}\ tells us that 
$$
\phi_n(\bs^\bi)=-\frac r{{a}_r}\prodd j1n (\frac j{j+1})^{\lam_j}\prod_{{j=1,\ldots,n}\atop{j\ne r}}(1-\lam_j/{a}_j).
$$
If $\lam_r={a}_r$, we can safely assume that $\phi_n(\gam_{1+({a}_r+1)a_r+\sum_{j\ne r}\lam_ja_j})=0$, and 
then $\phi_n(\bs^\bi)=0$ in this case. So we have 
$$
\sum_{{\bi\in\cI_1\up n}\atop{m_0(\bi)=1}} |\phi_n(\bs^\bi)|^2\le \summ r1n
\sum_{{\lambda_j=0, ..., {a}_j}\atop{\range j1n}}  (r/{a}_r)^2 \, \prodd j1n (\frac j{j+1})^{\lam_j} ,$$
which is dominated by 
$$\summ r1n (r/{a}_r)^2\cdot\prodd j1n\summ{\lam_j}0\infty (\frac j{j+1})^{2\lam_j}
=
\summ r1n(r/{a}_r)^2\cdot \prodd j1n \frac{(j+1)^2}{2j+1}.$$  
Now 
$\summ r1\infty(r/{a}_r)^2<1$, given a mild growth condition, so $$
\sum_{{\bi\in\cI_1\up n}\atop{m_0(\bi)=1}} |\phi_n(\bs^\bi)|^2\le \prodd j1n \frac{(j+1)^2}{2j+1}, \qquad n\in\bN.$$

Combining this equation with the previous lemma, we have the result:
\begin{theorem} \label{6.4}  Given growth conditions, we have $\sum_{\bi\in\cI_1\up n}|\phi_n(\bs^\bi)|^2<2\cdot \prodd j1n\frac{(j+1)^2}{2j+1}$, for all $n\in\bN$.
\end{theorem}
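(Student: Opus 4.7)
The plan is to decompose the sum according to whether $m_0(\bi)=1$ or $m_0(\bi)>1$, and then invoke the two bounds that have already been established immediately above the theorem statement. Concretely, I would write
\[
\sum_{\bi\in\cI_1\up n}|\phi_n(\bs^\bi)|^2
= \sum_{{\bi\in\cI_1\up n}\atop{m_0(\bi)=1}}|\phi_n(\bs^\bi)|^2
\; + \; \sum_{{\bi\in\cI_1\up n}\atop{m_0(\bi)>1}}|\phi_n(\bs^\bi)|^2,
\]
a legitimate partition because \reflem{6.1} ensures that $\phi_n(\bs^\bi)=0$ whenever $\eta(\bi)$ fails to meet $1+\Lam_n-\Lam_n$, so every contributing $\bi$ has a well-defined minimum $m_0(\bi)\ge 1$.

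Next I would apply \reflem{6.3} directly to the second sum, giving the bound $\prodd j1n \frac{(j+1)^2}{2j+1}$. For the first sum I would cite the explicit calculation carried out in the paragraph immediately preceding the theorem, which (using the shape of $\bs^\bi$ forced by $m_0(\bi)=1$, the fact that $E(\bi)=\emptyset$ and exactly one $\mu_j$ equals $1$, together with \refeq{phin}) also yields $\prodd j1n \frac{(j+1)^2}{2j+1}$, after invoking the mild growth condition $\summ r1\infty (r/{a}_r)^2<1$.

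Adding these two bounds gives $2\cdot \prodd j1n\frac{(j+1)^2}{2j+1}$, and I would upgrade the weak inequality to the claimed strict inequality by observing that both of the bounds above are themselves strict: the $m_0(\bi)=1$ estimate leaves the slack $\summ r1\infty (r/{a}_r)^2<1$ unused, and the $m_0(\bi)>1$ estimate absorbs the factor $\prodd j1\infty(1+a_j^{-1})\le 2$ strictly for sufficiently rapidly growing $(a_j)$. Since this theorem is essentially just an assembly of the two cases already treated, I expect no genuine obstacle; the only point that warrants care is checking that the partition covers every $\bi\in\cI_1\up n$ that actually contributes to the sum, which is exactly what \reflem{6.1} guarantees.
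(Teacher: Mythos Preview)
Your proposal is correct and follows precisely the paper's own argument: split the sum according to whether $m_0(\bi)=1$ or $m_0(\bi)>1$, apply \reflem{6.3} to the latter, apply the explicit computation immediately preceding the theorem to the former, and add. The paper summarizes this as ``Combining this equation with the previous lemma, we have the result,'' and your remark that \reflem{6.1} justifies the partition (and that the slack in $\summ r1\infty(r/a_r)^2<1$ yields strict inequality) is exactly right.
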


\section{Bound on $\sum_{\bi\in \cI_2\up n}|\phi_n(\bs^\bi)|^2$} \label{7se}  

To get this bound, we need to have a good estimate of $|\phi_n(\gam_j\up n)|$ in cases when $\gam_j\up n$ is not one of the basis vectors for $H_n$ with respect to which $\phi_n$ is defined directly in \refeq{phin}. That is, we need to know about $\phi_n(\gam_j\up n)$ when $j\ge{a}_{n+1}-{a}_n$.

Let $e_j^*$ ($a_n<j\le a_{n+1}$) denote the linear functional on $H_n$ with $\sprod {e_i}{e_j^*}=\del_{i,j}$.
A general linear functional $\psi=\summ j{1+{a}_n}{{a}_{n+1}}\lam_je_j^*$ 
will have 
\begin{equation} \label{pngk} 
 \psi(\gam_k\up n)=\summ j{1+{a}_n}{{a}_{n+1}}\lam_j2^{-jk}=p(2^{-k}),
 \end{equation}
where $p(t)=\summ j{1+{a}_n}{{a}_{n+1}}\lam_jt^j$ is a polynomial of degree at most ${a}_{n+1}$, with $t^{1+{a}_n}$ a factor 
of $p(t)$. We  will have $p(2^{-k})=\del_{i,k}$  when $0 \leq k < {a}_{n+1}-{a}_n$, if we choose $p=p_{n,i}$, where 
\begin{equation} \label{pnkt}  p_{n,i}(t)=(2^it)^{1+{a}_n}\prod_{{0\le j<{a}_{n+1}-{a}_n}\atop{j\ne i}}\frac
{t-2^{-j}}{2^{-i}-2^{-j}}.
\end{equation}
If we write $\psi_{n,i}$ for the corresponding linear functional, we have 
\begin{equation} \label{psi}
  x=\summ i0{{a}_{n+1}-{a}_n-1}\sprod x{\psi_{n,i}}\gam_i\up n
\end{equation}
for every $x\in H_n$.  This may be seen by checking it on the basis $\seqq i0{{a}_{n+1}-{a}_n-1}{\gam_i\up n}$,
 using \refeq{pngk} and the fact above \refeq{pnkt}. 
Indeed,
\begin{equation} \label{pnik1}  \gam_k\up n=\summ i0{{a}_{n+1}-{a}_n-1}p_{n,i}(2^{-k})\gam_i\up n.
\end{equation}

\blem\l gk!
Given a suitable growth conditions on our underlying sequences, we will have 
$|\phi_n(\gam_l)|\le 2^{-l(1+{a}_n)-{a}_{n+1}^2/3}$ for all $l \in\bN$, 
$l\ge 1+\xi_n$. 
\elem
\bproof It is enough to show this for $l\ge {a}_{n+1}-{a}_n$, since $\phi_n(\gam_l\up n)=0$ for 
$l\in(1+\xi_n,{a}_{n+1}-{a}_n)$ by \refeq{phin}.  Note that 
$\phi_n$ is $\gam$-supported on $[0,1+\xi_n]$. When $k\le 1+\xi_n$, we have 
by \refeq{pnkt}  that $$
p_{n,k}(2^{-l})=
2^{(k-l)(1+{a}_n)}\prod_{{0\le j<{a}_{n+1}-{a}_n}\atop{j\ne k}}\frac{2^{-l}-2^{-j}}{2^{-k}-2^{-j}}.
$$
When $j<k$ the factor $|\frac{2^{-l}-2^{-j}}{2^{-k}-2^{-j}}|$ is in $(1,2]$.  When $j>k$ we have 
$|\frac{2^{-l}-2^{-j}}{2^{-k}-2^{-j}}|\le 2^{k-j+1}$.  Thus 
$$
|p_{n,k}(2^{-l})|\le 2^{(k-l)(1+{a}_n)}\cdot 2^k\cdot \prodd j{k+1}{{a}_{n+1}-{a}_n-1} 2^{k-j+1}
$$
\begin{equation} \label{5-1}
  =  2^{(k-l)(1+{a}_n)}\cdot 2^k\cdot 2^{-\frac 12({a}_{n+1}-{a}_n-k-2)({a}_{n+1}-{a}_n-k-1)}
\le 2^{-1-l(1+{a}_n)-{a}_{n+1}^2/3},
\end{equation}
given a suitable growth condition.
Now the
nonzero coefficients $\phi_n(\gam_k)$ in \refeq{phin} are positive numbers at most 
$2^{-\sum_jt_ja_j^2}$, where $k=1+\sum_j t_ja_j$, $0\le t_j<{a}_j$. These are distinct nonnegative powers of 2, so the sum of all the coefficients is at most 2. So  \refeq{phin}, \refeq{pnik1},
 and \refeq{5-1} give us 
$|\phi_n(\gam_l)|\le 2^{-l(1+{a}_n)-{a}_{n+1}^2/3}$.
\eproof

\begin{theorem} \label{7.2}  Given growth conditions, we have 
$\sum_{\bi\in\cI_2\up n}|\phi_n(\gam_i\up n)|^2\le 2^{-2{a}_{n+1}^2/3}$
for all $n\in\bN$.
\end{theorem}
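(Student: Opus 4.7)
The plan is to exploit \reflem{gk}'s strong decay by showing that every $\gam_l\up n$ appearing in the formal expansion of $\bs^\bi$, for $\bi \in \cI_2\up n$, has index $l$ far exceeding $1 + \xi_n$, so that the lemma applies uniformly to every term of the expansion.

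First, for $\bi \in \cI_2\up n$ write $\lam_j = \bi(\frac j{j+1} 2^{a_j^2}\gam_{a_j}\up n)$ and $\mu_j = \bi(j(2^{a_j^2}\gam_{1+a_j}\up n - \gam_1\up n))$; by definition of $\cI_2\up n$ these are the only nonzero components of $\bi$. Formally expanding $\bs^\bi = \sum_l c_l \gam_l\up n$ via $\gam_l\up n \gam_m\up n = \gam_{l+m}\up n$, each factor $(\gam_{a_j}\up n)^{\lam_j}$ contributes $a_j\lam_j$ to the index, while each monomial from $(2^{a_j^2}\gam_{1+a_j}\up n - \gam_1\up n)^{\mu_j}$ has index at least $\mu_j$ (taking all the $-\gam_1\up n$ terms). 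Hence every surviving $l$ satisfies
$$l \ge \summ j1n (a_j\lam_j + \mu_j) \ge \summ j1n (\lam_j + \mu_j) = |\bi| \ge \sqrt{a_{n+1}}.$$
A growth condition enforcing $\sqrt{a_{n+1}} > 1 + \xi_n$ then makes \reflem{gk} applicable to every $\phi_n(\gam_l\up n)$ that occurs, yielding $|\phi_n(\gam_l\up n)| \le 2^{-l(1+a_n) - a_{n+1}^2/3}$.

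Second, I introduce the submultiplicative norm $N(x) = \sum_l |c_l| 2^{-l(1+a_n)}$ on formal polynomials in $\gam_1\up n$ (submultiplicativity is immediate from the exponential form). Applying \reflem{gk} termwise gives
$$|\phi_n(\bs^\bi)| \le 2^{-a_{n+1}^2/3} \, N(\bs^\bi).$$
A direct computation, using $a_j \le a_n$ for $j \le n$, yields $N(\frac j{j+1} 2^{a_j^2}\gam_{a_j}\up n) \le 2^{-a_j}$ and $N(j(2^{a_j^2}\gam_{1+a_j}\up n - \gam_1\up n)) \le j \cdot 2^{-a_j}$, so by submultiplicativity
$$N(\bs^\bi)^2 \le \prodd j1n 2^{-2a_j(\lam_j + \mu_j)} \, j^{2\mu_j}.$$

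Third, I sum over $\bi \in \cI_2\up n$. Splitting $2a_j(\lam_j + \mu_j) = (\lam_j + \mu_j) + (2a_j - 1)(\lam_j + \mu_j)$ peels off $2^{-|\bi|} \le 2^{-\sqrt{a_{n+1}}}$; the remainder factors over $j$ as a product of geometric series in $\lam_j$ and $\mu_j$, each summable under a mild growth condition ensuring $j^2 < 2^{2a_j - 1}$, with combined product bounded by a universal constant $C$. Multiplying by the prefactor $2^{-2a_{n+1}^2/3}$ from squaring gives $\sum_{\bi \in \cI_2\up n} |\phi_n(\bs^\bi)|^2 \le C \cdot 2^{-2a_{n+1}^2/3 - \sqrt{a_{n+1}}} \le 2^{-2a_{n+1}^2/3}$. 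The main obstacle is the first step—certifying that the \emph{entire} $\gam$-expansion of $\bs^\bi$ sits above the threshold $1 + \xi_n$ where \reflem{gk} is sharp. The rest is routine geometric series bookkeeping, provided the growth of $(a_n)$ is chosen fast enough to tame the $j^{2\mu_j}$ factors.
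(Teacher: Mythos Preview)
Your argument is correct and follows essentially the same route as the paper. Your norm $N$ is exactly the paper's weighted $\ell^1$ norm on $c_{00}(\bN_0)$ transported via the homomorphism $e_i\mapsto\gam_i\up n$; your observation that every $\gam_l\up n$ in the expansion of $\bs^\bi$ has $l\ge|\bi|\ge\sqrt{a_{n+1}}>1+\xi_n$ is the same support estimate the paper uses to invoke \reflem{gk}; and your final geometric-series bookkeeping (peeling off $2^{-|\bi|}$ and summing the residual factors) is a minor reorganisation of the paper's direct sum over $\sum_j\lam_j+\mu_j\ge\sqrt{a_{n+1}}$, with slightly different but equally valid bounds on the individual generators ($j\cdot 2^{-a_j}$ in place of the paper's $2^{-a_n/2}$).
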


\begin{proof} If we impose the convolution multiplication on $c_{00}(\bN_0)$, the norm 
$\nm{\summ i0N\bet_ie_i}=\summ i0N2^{-i(1+{a}_n)}|\bet_i|$ is an algebra norm.
One can define an algebra homomorphism $\theta$ from $c_{00}(\bN_0)$ into $H_n$ with 
$\theta(e_i)=\gam_i\up n$, and \reflem{gk}\ can then be rephrased as follows: if $z\in c_{00}$ with 
$z\in\lin\{e_j:j>\xi_n\}$, then $|\phi_n(\theta(z))|\le  2^{-{a}_{n+1}^2/3} \nm z$.

We note that, among the elements of $S_0\up n$,
$ \frac j{j+1}2^{a_j^2}\gam_{a_j}\up n=\theta(u_j)$ with $$\nm{u_j}\le 2^{a_j({a}_j-1-{a}_n)}\le 
2^{-a_j},$$ and $j(2^{a_j^2}\gam_{1+a_j}\up n-\gam_1\up n)=\theta(v_j)$ where 
$$\nm{v_j}\le 2j\cdot 2^{-{a}_n}\le 2^{-{a}_n/2}, \qquad j\le n,$$ given a growth condition. 
If $\bi\in\cI_2\up n$ (so $|\bi|\ge \sqrt{{a}_{n+1}}$),
we  write (as usual) $$\lam_j=\bi(\frac j{j+1}2^{a_j^2}\gam_{a_j}\up n), \; \; \; \; \; \;
\mu_j=\bi (j(2^{a_j^2}\gam_{1+a_j}\up n-\gam_1\up n)).$$  Let $w =\prodd j1n u_j^{\lam_j}v_j^{\mu_j}$.
Then   
$\bs^\bi=\theta(w)$, and 
$\nm w\le 2^{-\summ j1n(\lam_ja_j+\mu_j{a}_n/2)}$.  Also, $$w\in\lin\{e_i:i\ge\sqrt{{a}_{n+1}}\}
\subset \lin\{e_i:i>1+\xi_n\}$$ (given a growth condition).  So 
\reflem{gk}\ applies and tells us (in its ``rephrased'' form) that $$|\phi_n(\bs^\bi)|=|\phi_n(\theta(w))|\le 2^{-{a}_{n+1}^2/3}\nm w
\le 2^{-{a}_{n+1}^2/3-\summ j1n(\lam_ja_j+\mu_ja_n/2)}.$$
So 
$$\sum_{\bi\in\cI_2\up n}|\phi_n(\gam_i\up n)|^2\le 2^{-2{a}_{n+1}^2/3}\cdot
\sum_{{\lam_1,\mu_1,\ldots,\lam_n,\mu_n=0,\ldots,\infty}\atop{\sum_j\lam_j+\mu_j\ge\sqrt{{a}_{n+1}}}} 2^{-2\summ j1n\lam_ja_j+\mu_ja_n/2}.$$ A mild growth condition ensures that the right hand sum is at most $1$ for any $n$, so  we have the required 
result. \end{proof}

 \section{An estimate for $\sum_{\bi\in\cI_3\up n}|\phi_n(\bs^\bi)|^2.$} \label{8se} 

We now turn our attention to the set $\cI_3\up n$, which involves index functions $\bi$ for which 
$\bi(a_k^{-1}e_k)>0$ for some $k\in({a}_n,{a}_{n+1}]$. Since the product of distinct $e_k$ is zero, we get $\bs^\bi=0$ if  $\bi(a_k^{-1}e_k)>0$ for two distinct $k$. If there is one such $k$, and if the index $\bi(a_k^{-1}e_k)=m>0$, we get $$\bs^\bi=a_k^{-m}e_k\cdot \bs^{\bi'}=a_k^{-m}e_k^*(\bs^{\bi'})e_k,$$ where $\bi'$ is an element of $\cI_1\up n\cup\cI_2\up n$. Accordingly we get 
$$\sum_{\bi\in\cI_3\up n}|\phi_n(\bs^\bi)|^2=
\summ k{1+{a}_n}{{a}_{n+1}}\summ m1\infty \sum_{\bi'\in\cI_1\up n\cup\cI_2\up n}
a_k^{-2m}|e_k^*(\bs^{\bi'})\phi_n(e_k)|^2.$$
This is equal to
$$\summ k{1+{a}_n}{{a}_{n+1}}(a_k^2-1)^{-1}\sum_{\bi'\in\cI_1\up n\cup\cI_2\up n}
|e_k^*(\bs^{\bi'})\phi_n(e_k)|^2,$$ which is dominated (since the $e_k^*$ are characters, hence contractive) by $$\summ k{1+{a}_n}{{a}_{n+1}} \, (a_k^2-1)^{-1} \,
\sum_{\bi'\in\cI_1\up n\cup\cI_2\up n} \, \nm{\bs^{\bi'}}_0^2\, |\phi_n(e_k)|^2.$$
The $c_0$ norms of the elements of $S_0\up n$ are listed in the first paragraph of the proof of
Theorem \ref{sac}.
Writing 
$$\eps_j=\nm{\frac j{j+1}2^{a_j^2}\gam_{a_j}\up n}_0\le 2^{-a_j} ,
\; \; \; \; \text{and} \; \; \eps_j'=\nm{j(2^{a_j^2}\gam_{1+a_j}\up n-\gam_1\up n)}_0,$$
we have that $$\eps_j' 
\le j(2^{-a_j} + 2^{-1-a_n}) \leq 2j 2^{-a_j} \le 2^{-{a}_j/2},$$ given a growth condition.
Now $\{\bs^{\bi'}:\bi'\in\cI_1\up n\cup\cI_2\up n\}$ is equal to $$\{\prodd j1n (\frac j{j+1}2^{a_j^2}\gam_{a_j}\up n)^{\lam_j}(j(2^{a_j^2}\gam_{1+a_j}\up n-\gam_1\up n))^{\mu_j}:
\lam_j,\mu_j\in\bN_0,\range j1n\},$$ and so
$$\sum_{\bi'\in\cI_1\up n\cup\cI_2\up n}
\nm{\bs^{\bi'}}_0^2 \le \sum_{\lam_1,\mu_1,\ldots,\lam_n,\mu_n\ge 0}\prodd j1n\eps_j^{2\lam_j}(\eps_j')^{2\mu_j} \le \prodd j1n (1-\eps_j^2)^{-1}(1-{\eps_j'}^2)^{-1}.$$
By the estimates for $\eps_j, \eps_j'$ above, the latter is dominated by
$$\prodd j1n (1-2^{-2{a}_j})^{-1} (1-2^{-{a}_j})^{-1}  <2,$$ given a growth condition.
So, \begin{equation} \label{32}  \sum_{\bi\in\cI_3\up n}|\phi_n(\bs^\bi)|^2\le
\summ k{1+{a}_n}{{a}_{n+1}}2(a_k^2-1)^{-1}|\phi_n(e_k)|^2.
\end{equation}
It is easy to argue  from \refeq{phin} that  
$\sum_j|\phi_n(\gam_j\up n)|\le 2$. Each $e_k=\summ i{1+{a}_n}{{a}_{n+1}}\beta_{k,i}\gam_i\up n$, where $\beta_{k,i}=\sprod{e_k}{\psi_{n,i}}$ as in \refeq{psi}. Now $\sprod{e_k}{\psi_{n,i}}$ is the coefficient of $t^k$ in the polynomial $p_{n,k}(t)$ as in \refeq{pnkt}; a crude estimate is that no coefficient of this polynomial exceeds 
$$2^k(1+{a}_n)\cdot \prod_{{0\le j<{a}_{n+1}-{a}_n}\atop{j\ne k}}\frac 2{|2^{-k}-2^{-j}|}
 \le 2^k(1+{a}_n)\cdot\prodd j0{{a}_{n+1}-{a}_n-1}2^{j+2}\le 2^{k(1+{a}_n)+\frac 12{a}_{n+1}^2} .$$
But this is dominated by $2^{{a}_{n+1}^2},$ given a mild growth condition.
Putting this estimate in \refeq{32}, we have 
$$\sum_{\bi\in\cI_3\up n}|\phi_n(\bs^\bi)|^2\le
\summ k{1+{a}_n}{{a}_{n+1}}2(a_k^2-1)^{-1}2^{{a}_{n+1}^2}.$$
\blem\l 8.1!
Given growth conditions, we have $\sum_{\bi\in\cI_3\up n}|\phi_n(\bs^\bi)|^2\le 1$
for every $n\in\bN$.
\elem 
\proof Given the last inequality, all we need to do is demand the growth conditions ${a}_n\ge n+1$
so that $a_{k}^2>1+2^{2+{a}_{k-1}^2}$ for all $k$. For then, since the $(a_k)$ are strictly increasing, we have $ a_{k}^2>1+2^{r+1+{a}_{k-r}^2}$ for all $0<r<k$.  This implies that 
$$\summ k{1+{a}_n}{{a}_{n+1}}2(a_k^2-1)^{-1}2^{{a}_{n+1}^2}\le
\summ k{2+n}\infty 2(a_k^2-1)^{-1}2^{{a}_{n+1}^2} \le \summ r{1}\infty 2\cdot 2^{-r-1-{a}_{n+1}^2}2^{{a}_{n+1}^2},$$ where 
$r=k-n-1$.  But the last quantity equals  $\summ r{1}\infty 2^{-r}=1$.\endproof

\section{Conclusions}  \label{9se}

\begin{theorem} \label{fin}   
Given growth conditions on the underlying sequence $(a_k)_{k=1}^\infty$, we have $\sum_{\bi\in\cI_0\up n}|\phi_n(\bs^\bi)|^2\le 2+2\cdot\prodd j1n\frac{(j+1)^2}{2j+1}$ for all $n\in\bN$. Lemma \ref{nm22} holds: one has $\nmtn{\gam_0\up n}\le 3\cdot \nmtn{\gam_1\up n}$ for all $n$. The operator norm $\Vert g \Vert^{(n)}_{{\rm op}} \ge \frac 13$. Theorem {\rm \ref{sac}} is true, as is {\rm  \refthm{sec}}, for these choices of the underlying sequence.
\end{theorem}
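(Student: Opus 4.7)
The plan is to assemble the estimates of Sections \ref{sixse}--\ref{8se} via the decomposition $\cI_0\up n=\cI_1\up n\cup\cI_2\up n\cup\cI_3\up n$ and the identity \refeq{g01}, then convert the resulting bound on the tail sum into the norm ratio $\nmtn{\gam_0\up n}\le 3\nmtn{\gam_1\up n}$ using the lower bound from \reflem{le}, and finally extract the operator-norm estimate $\opnmn g\ge\frac 13$ by noting that under $\rho_n$ the element $g$ sends $\gam_0\up n$ to $\gam_1\up n$.

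For the first assertion I would simply add \refthm{6.4}, \refthm{7.2}, and \reflem{8.1}. Provided the growth conditions are strong enough that $2^{-2{a}_{n+1}^2/3}\le 1$ (which is certainly already imposed in Section \ref{7se}), the total is at most $2\prodd j1n\frac{(j+1)^2}{2j+1}+1+1$, giving the claimed inequality. Inserting this into \refeq{g01} yields
\begin{equation*}
(\nmtn{\gam_0\up n})^2\ \le\ (\nmtn{\gam_1\up n})^2+2+2\prodd j1n\frac{(j+1)^2}{2j+1}.
\end{equation*}
Now \reflem{le} gives $\prodd j1n\frac{(j+1)^2}{2j+1}\le 2(\nmtn{\gam_1\up n})^2$, so the right-hand side is at most $5(\nmtn{\gam_1\up n})^2+2$. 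For each $n\ge 1$ the $j=1$ factor of the product already forces $(\nmtn{\gam_1\up n})^2\ge\frac 12\cdot\frac 43=\frac 23>\frac 12$, hence $2\le 4(\nmtn{\gam_1\up n})^2$ and we conclude $(\nmtn{\gam_0\up n})^2\le 9(\nmtn{\gam_1\up n})^2$, as required by \reflem{nm22}.

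For the operator-norm lower bound I would use that $\gam_0\up n$ is the identity of $\Del_nA_0$, so under the representation $\rho_n$ of Section \ref{hilr} we have $\rho_n(g)\gam_0\up n=\Del_n g\cdot\gam_0\up n=\gam_1\up n$. Therefore
\begin{equation*}
\opnmn g\ \ge\ \frac{\nmtn{\gam_1\up n}}{\nmtn{\gam_0\up n}}\ \ge\ \frac 13,
\end{equation*}
which is the second half of \reflem{nm22}. With \reflem{nm22} now established, the proof of \refthm{sac} (which is the step that invoked it) goes through, and via \refcor{c4} so does \refthm{sec} for the operator algebra norm $\Nm$ of \refeq{opnm}.

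The main obstacle has already been dealt with in Sections \ref{sixse}--\ref{8se}; what remains here is essentially bookkeeping. The only subtlety worth flagging is whether the additive constant $2$ in the sum bound could spoil the ratio $3$: it does not, because \reflem{le} supplies a lower bound for $(\nmtn{\gam_1\up n})^2$ that already outpaces this constant for every $n\ge 1$. No additional growth conditions on $(a_n)$ beyond those already imposed in the earlier sections are required.
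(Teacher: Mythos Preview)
Your proof is correct and follows essentially the same route as the paper: summing the bounds from \refthm{6.4}, \refthm{7.2}, and \reflem{8.1}, inserting into \refeq{g01}, invoking \reflem{le} to absorb the tail into $9(\nmtn{\gam_1\up n})^2$, and then reading off $\opnmn g\ge\tfrac13$ from $\rho_n(g)\gam_0\up n=\gam_1\up n$. Your explicit arithmetic for why the additive constant $2$ is absorbed (via $(\nmtn{\gam_1\up n})^2\ge\tfrac23>\tfrac12$) just spells out what the paper leaves implicit.
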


\proof The first estimate is obtained by summing the estimates given in \refthm{6.1}, \refthm{7.2}\ and 
\reflem{8.1}. Substituting in \refeq{g01}, we have $$(\nmtn{\gam_0\up n})^2\le (\nmtn{\gam_1\up n})^2+2+2\cdot\prodd j1n\frac{(j+1)^2}{2j+1}.$$ Applying the lower estimate \reflem{le}\ for $\nmtn{\gam_1\up n}$, we see that $(\nmtn{\gam_0\up n})^2$ is dominated by $9(\nmtn{\gam_1\up n})^2$, and the second estimate follows. Of course the operator norm $$\opnmn g\ \ge\ \nmtn{\ g\ \gam_0\up n}/\nmtn{\gam_0\up n}=\nmtn{\gam_1\up n}/\nmtn{\gam_0\up n}\ge 1/3.$$ \refthm{sec}\ and \refthm{sac}\ now follow by the argument after \reflem{nm22}. 
\endproof

\medskip

{\em Acknowledgements.} 
  The first author wishes to thank the departments at the Universities of 
Leeds and Lancaster, and in particular the second author and Garth Dales,
for their warm hospitality during a visit in April--May 2013.   
   We also gratefully acknowledge   support from UK research council
grant  EP/K019546/1 for that visit.  
 We thank Garth Dales for a discussion on Banach sequence algebras at that time, and also for providing us with a copy of the related preprint \cite{DU}
in October 2014 after our paper had been accepted
for publication (although he had told us some of the contents of it earlier).   
After one translates our results into their language, as we explained how to do two  paragraphs 
before Theorem \ref{ch}, and compares, it seems that the overlap between our paper and theirs is quite small; for 
example Corollary \ref{finok}
is  also a consequence of  their Proposition 3.1, and we also have examples in Section 1 of weakly compact 
non-Tauberian Banach sequence algebras.   We also thank Joel Feinstein and Garth Dales for pointing out the application of our result mentioned after Theorem \ref{ch} and at the end of Section 6, and for pointing out several typos,
grammatical errors, and small misstatements.

\end{document}